\newtheorem{theorem}{Theorem}[section]
\newtheorem{proposition}[theorem]{Proposition}
\newtheorem{lemma}[theorem]{Lemma}
\theoremstyle{remark}
\newtheorem*{remark}{Remark}
\newtheorem*{notation}{Notation}
\newcommand{\tL}{\mathtt 1}                            
\newcommand{\tO}{\mathtt 0}                            
\DeclareMathOperator{\e}{\mathrm{e}}                   
\DeclareMathOperator{\dens}{\,\mathrm dens}            
\DeclareMathOperator{\LandauO}{\mathcal O}             
\DeclareMathOperator{\realpart}{\mathrm {Re}}
\DeclareMathOperator{\imagpart}{\mathrm{Im}}
\DeclareMathOperator{\ud}{\mathrm d\!}              
\title
{A lower bound for Cusick's conjecture on the digits of $n+t$}
\author{Lukas Spiegelhofer\thanks{
The author acknowledges support by the project MuDeRa,
which is a joint project between the FWF (Austrian Science Fund) and the ANR (Agence Nationale de la Recherche, France).
Moreover, the author was supported by the FWF project F5502-N26, which is a part of the Special Research Program ``Quasi Monte Carlo methods: Theory and Applications''.}\\
Vienna University of Technology, Austria}
\date{}
\begin{document}
\maketitle
\begin{abstract}
Let $s$ be the sum-of-digits function in base $2$, which returns the number of $\mathtt 1$s in the base-2 expansion of a nonnegative integer.
For a nonnegative integer $t$, define the asymptotic density
\[
c_t=\lim_{N\rightarrow \infty}
\frac 1N\bigl\lvert\{0\leq n<N:s(n+t)\geq s(n)\}\bigr\rvert.\]
T.~W.~Cusick conjectured that $c_t>1/2$.
We have the elementary bound $0<c_t<1$; however, no bound of the form $0<\alpha\leq c_t$ or $c_t\leq \beta<1$, valid for all $t$, is known.
In this paper, we prove that $c_t>1/2-\varepsilon$ as soon as $t$ contains sufficiently many blocks of $\mathtt 1$s in its binary expansion.
In the proof, we provide estimates for the moments of an associated probability distribution; this extends the study initiated by Emme and Prikhod'ko (2017) and pursued by Emme and Hubert (2018).
\end{abstract}
\maketitle
\renewcommand{\thefootnote}{\fnsymbol{footnote}} 
\footnotetext{\emph{2010 Mathematics Subject Classification.} Primary: 11A63, 05A20; Secondary: 05A16,11T71}
\footnotetext{\emph{Key words and phrases.} Cusick conjecture, Hamming weight, sum of digits}


\renewcommand{\thefootnote}{\arabic{footnote}} 
\section{Introduction and main result}
It is an elementary problem of deceptive simplicity to study the behaviour of the base-$q$ digits of an integer under addition of a constant.
For example, it is clear that addition of the constant $1$ to an even integer in base $2$ replaces the rightmost digit $\tO$ by $\tL$, and addition of $1$ to an odd integer replaces the rightmost block of $\tL$s by a block of $\tO$s and the digit $\tO$ directly adjacent to this block by $\tL$.
Considerations of this kind can be carried out for each given constant $t$ in place of $1$, which gives a complete description of the digits of $n$ and $n+t$.

However, due to carry propagation the situation quickly turns into an unwieldy case distinction for growing $t$, and a general structural principle describing these cases is out of sight.
We therefore consider a simplification of this problem (which is still difficult) by studying a parameter associated to the binary expansion: the sum of digits $s(n)$ of $n$ in base $2$, which is just the number of $\tL$s occurring in the binary expansion of $n$.
More precisely, we are interested in the quantities
\[\delta(j,t)=\dens\,\{n\in\mathbb N:s(n+t)-s(n)=j\},\]
where $\dens A$ is the asymptotic density of a set $A\subseteq \mathbb N$, which exists in our case.
(In fact, the set in question is a finite union of arithmetic progressions, see B\'esineau~\cite{B1972} or the proof following~\eqref{eqn_density_recurrence}.)
Cusick's conjecture on the binary sum-of-digits function (private communication, 2011) concerns the values
\[c_t=\dens\,\{n\in\mathbb N:s(n+t)\geq s(n)\}=\delta(0,t)+\delta(1,t)+\cdots\]
and states that for all $t\geq 0$,
\begin{equation}\label{eqn_cusick}
c_t>1/2.
\end{equation}
This easy-to-state elementary problem appears to be difficult, despite its apparent simpleness.
Moreover, we think that it is not an artificial conjecture.
In our opinion, this combination of characteristics constitutes the beauty of this problem.
The proof below uses interesting techniques; this highlights the complex structure of the problem and further adds to the interest of this question. 

We note the partial results~\cite{DKS2016,EH2018,EH2018b,EP2017,S2019} on Cusick's conjecture, among which we find an almost-all result by Drmota, Kauers, and the author~\cite{DKS2016} and a central limit-type result by Emme and Hubert~\cite{EH2018}.

Cusick formulated his conjecture while he was working on the related \emph{Tu--Deng conjecture}~\cite{TD2011,TD2012}, which is relevant in cryptography: assume that $k$ is a positive integer and $t\in\{1,\ldots,2^k-2\}$. Then this conjecture states that
\[\Bigl \lvert\Bigl\{(a,b)\in\bigl\{0,\ldots,2^k-2\bigr\}^2:a+b\equiv t\bmod 2^k-1, s(a)+s(b)<k\Bigr\}\Bigr \rvert\leq 2^{k-1}.\]
Partial results are known, see~\cite{CLS2011,DY2012,F2012,FRCM10,SW2019,TD2011},
but the full conjecture is still open.
Besides an almost-all result on Tu and Deng's conjecture~\cite{SW2019}, Wallner and the author proved in that paper that this conjecture in fact implies Cusick's conjecture.

We return to Cusick's conjecture and begin with the case $t=1$.
From the introductory observation we obtain $s(n+1)-s(n)=1-\nu_2(n+1)$, where $\nu_2(m)=\max\{k\geq 0:2^k\mid m\}$,
which implies that $\delta(\cdot,1)$ describes a geometric distribution with mean $0$: we have
\[\delta(j,1)=\begin{cases}0,&j>1;\\2^{j-2},&j\leq 1,\end{cases}\]
and therefore $c_1=3/4$.
In other words, the sum of digits of $n+1$ is smaller than the sum of digits of $n$ if and only if $n\equiv 3\bmod 4$, since only in this case we lose at least one $\tL$ by replacing the rightmost block $\tO\tL^k$ by $\tL\tO^k$ in the binary expansion.

Next, we consider the general case $t\in\mathbb N$.
It follows from a recurrence due to B\'esineau~\cite{B1972} that the values $\delta(j,t)$ satisfy the following recurrence for all $k\in\mathbb Z$ and $t\geq 0$:
\begin{equation}\label{eqn_density_recurrence}
\begin{aligned}
\delta(j,2t)&=\delta(j,t),\\
\delta(j,2t+1)&=\frac 12 \delta(j-1,t)+\frac 12\delta(j+1,t+1).
\end{aligned}
\end{equation}
The proof of the first identity is as follows:
we have the disjoint union 
\begin{align*}
\{n\in\mathbb N:s(n+2t)-s(n)=k\}
&=
2\{n\in\mathbb N:s(2n+2t)-s(2n)=k\}
\\&\cup \left(2\{n\in\mathbb N:s(2n+1+2t)-s(2n+1)=k\}+1\right),
\end{align*}
and using the identities $s(2n)=s(n)$ and $s(2n+1)=s(n)+1$, the first line of the recurrence follows. 
In an analogous way, the second line can be proved.
This proof also shows inductively that the sets defining $\delta(j,t)$ are finite unions of arithmetic progressions.

Using the recurrence~\eqref{eqn_density_recurrence}, we verified~\eqref{eqn_cusick} by numerical computation for all $t<2^{30}$,
yielding the minimal value $18169025645289/2^{45}=0.516394\ldots$ at the position\\ $t=(111101111011110111101111011111)_2$ and at the position $t^R$ obtained by reversing the base-$2$ expansion of $t$.
(By a result of Morgenbesser and the author~\cite{MS2012} we always have $\delta(j,t)=\delta(j,t^R)$.)

Using a generating function approach and Chebyshev's inequality, Drmota, Kauers, and the author~\cite{DKS2016} obtained an almost-all result for Cusick's conjecture:
for all $\varepsilon>0$, we have
\[\lvert\{t<T:1/2<c_t<1/2+\varepsilon\}\rvert=T-\LandauO(T/\log T).\]

Moreover, the probability distribution defined by $\mu_t:j\mapsto\delta(j,t)$ for given $t$ was studied by Emme and Hubert~\cite{EH2018,EH2018b}, continuing work by Emme and Prikhod'ko~\cite{EP2017}.
In~\cite{EH2018}, Emme and Hubert considered the moments of $\mu_t$ and proved a central limit law.
We introduce the notation
$a_X(\lambda)=\sum_{0\leq i\leq \lambda}X_i2^i$
for $X\in\{0,1\}^{\mathbb N}$ and $\lambda\geq 0$,
and we write $\Phi(x)=\frac 1{\sqrt{2\pi}}\int_{-\infty}^x e^{-x^2/2}\ud x$.
Then their result states the following.
For almost all $X$ with respect to the balanced Bernoulli measure,
we have
\begin{equation}\label{eqn_EH}
\lim_{\lambda\rightarrow \infty}\dens\left\{n\in\mathbb N:\frac{s(n+a_X(\lambda))-s(n)}{\sqrt{\lambda/2}}\leq x\right\}=\Phi(x)
\quad\mbox{for all }
x\in\mathbb R.
\end{equation}

In particular, excluding the negligible case that $s(n+a_X(k))-s(n)=0$ and considering $x=0$, this statement implies that 
\begin{equation}\label{eqn_EH_almost_surely}
\lim_{k\rightarrow\infty}c_{a_X(k)}=1/2
\end{equation}
almost surely. Note that this latter result does not follow directly from the Drmota--Kauers--Spiegelhofer result~\cite{DKS2016}, since our error term is not strong enough. On the other hand, the theorem by Emme and Hubert does not give us a statement of the form $c_t>1/2$ as in~\cite{DKS2016}.

From~\eqref{eqn_EH} we obtain the result that $c_t>1/2-\varepsilon$ for almost all $t$ with respect to asymptotic density.
The proof of this fact is by contradiction:
assume that $c_t\leq 1/2-\varepsilon$ for at least $2^{\lambda+1} \delta$ many $t<2^{\lambda+1}$ and infinitely many $\lambda$, where $\delta>0$.
Define
\[A_k=\{X\in\{0,1\}^{\mathbb N}:c_{a_X(\lambda)}>1/2-\varepsilon\textrm{ for all }\lambda\geq k\}.\]
By the almost sure convergence to $1/2$ and since the sequence of sets $A_k$ is ascending, we have
$\mu(A_N)>1-\delta$ for some $N$.
Then for all $X\in A_N$ and $\lambda>N$ we have
$c_{a_X(\lambda)}>1/2-\varepsilon$.
By definition of the balanced Bernoulli measure, 
there exist at least $(1-\delta)2^{\lambda+1}$ many $t<2^{\lambda+1}$ such that $c_t>1/2-\varepsilon$. This is a contradiction to our assumption,
by which there exists $\lambda>N$ such that $c_t\leq 1/2-\varepsilon$ for at least $2^{\lambda+1}\delta$ many $t<2^{\lambda+1}$.

While this result clearly also follows from the theorem by Drmota, Kauers, and the author, it is this particular formulation that we want to sharpen.
Our main theorem gives a lower bound for Cusick's conjecture for all $t$ not contained in a very small exceptional set having a simple structure.
In this theorem and in the following, we will be concerned with \emph{blocks} of $\tO$s or $\tL$s in the binary expansion of $t$; by this, we will always mean contiguous blocks of maximal size, where we omit the lowest block of $\tO$s for even integers $t$.
In particular, if we have the binary expansion $t=(\tL^{m_0}\tO^{n_0}\tL^{m_1}\tO^{n_1}\cdots \tL^{m_{\ell-1}}\tO^{n_{\ell-1}}\tL^{m_\ell}\tO^{n_\ell})_2$ with positive integers $m_i$ and $n_i$ (with the exception of $n_\ell$, which may be zero), then $t$ contains $\ell$ blocks of $\tO$s and $\ell+1$ blocks of $\tL$s.

Our main result is the following lower bound for $c_t$ for many values $t$.
\begin{theorem}\label{thm_main}
For all $\varepsilon>0$ there exists an $L\geq 0$ such that the following holds:
if the binary expansion of $t\in\mathbb N$ contains at least $L$ blocks of $\tL$s, then
\[c_t>1/2-\varepsilon.\]
In particular, for all $\varepsilon>0$ there exist $\delta>0$ and $C>0$ such that for $T\geq2$,
\[\lvert\{0\leq t<T:c_t\leq 1/2-\varepsilon\}\rvert\leq C\log^\delta T.\]
\end{theorem}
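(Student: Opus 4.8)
\emph{Proof sketch.}
The plan is to recast the statement probabilistically and then to show that, once the binary expansion of $t$ has many blocks of $\tL$s, the distribution $\mu_t\colon j\mapsto\delta(j,t)$ is approximately Gaussian after rescaling, which pins $c_t$ near $1/2$. Let $Y_t$ denote a random variable with law $\mu_t$; recall from the discussion after~\eqref{eqn_density_recurrence} that $\mu_t$ is supported on finitely many integers. A short induction on $t$ using~\eqref{eqn_density_recurrence} gives $\mathbb E Y_t=0$ for every $t$ (this is already visible for $t=1$, where $\mu_t$ is a mean-zero geometric law). Writing $V_t=\mathbb E Y_t^2$ for the variance of $\mu_t$, we then have
\[
c_t=\mathbb P[Y_t\ge 0]=\mathbb P\Bigl[\,Y_t/\sqrt{V_t}\ge 0\,\Bigr],
\]
so it suffices to prove that, uniformly over all $t$ whose binary expansion contains at least $L$ blocks of $\tL$s, one has $V_t\to\infty$ and $Y_t/\sqrt{V_t}$ converges in distribution to a standard normal variable as $L\to\infty$. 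Since $0$ is a continuity point of $\Phi$, this forces $c_t\to 1/2$, hence $c_t>1/2-\varepsilon$ once $L$ is large enough in terms of $\varepsilon$.

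The technical heart is a set of uniform estimates for the moments $\mathbb E Y_t^k$, extending the work of Emme and Prikhod'ko and of Emme and Hubert. Iterating~\eqref{eqn_density_recurrence} along the binary digits of $t$ yields recursions for these moments; a mild nuisance is that the second line of~\eqref{eqn_density_recurrence} links $\mu_t$ to $\mu_{t+1}$, so one actually has to propagate joint quantities attached to the pair $(Y_t,Y_{t+1})$, equivalently to run a suitable transfer operator reading the digits of $t$. From these recursions I would extract, first, a \emph{uniform lower bound for the variance}: every block of $\tL$s in $t$ adds at least an absolute positive constant to $V_t$, so that $V_t\ge c\cdot(\text{number of blocks of }\tL\text{s in }t)$ and in particular $V_t\to\infty$; and second, \emph{matching upper bounds for the higher moments}, of the shape $\mathbb E Y_t^{2k}\ll_k V_t^{k}$ and $\lvert\mathbb E Y_t^{2k+1}\rvert\ll_k V_t^{k}$ (a long block of $\tL$s can shift $s$ by a large amount, but only with exponentially small probability, so its contributions still have light tails). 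Combining the two, each normalised moment $\mathbb E\bigl[(Y_t/\sqrt{V_t})^m\bigr]$ converges, as the number of blocks grows, to the $m$-th moment of the standard normal law (the odd ones tending to $0$), and the method of moments, together with a compactness argument, delivers the required uniform central limit theorem (one could alternatively aim for a quantitative $\lvert c_t-1/2\rvert=\LandauO\bigl((\text{number of blocks})^{-1/2}\bigr)$ via a Berry--Esseen bound).

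Granting this, the first assertion of the theorem follows as above. The second assertion is then a routine counting statement: an integer below $T$ whose binary expansion has fewer than $L$ blocks of $\tL$s is determined by at most $2L$ block lengths whose sum is at most $\lfloor\log_2 T\rfloor+1$, so there are $\LandauO\bigl((\log T)^{2L}\bigr)$ of them; with $L=L(\varepsilon)$ from the first part and, say, $\delta=2L$, this is the claimed bound (the constant $C$ absorbing the small values of $T$).

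The main obstacle is the moment package, and inside it the variance lower bound, which is a genuine non-degeneracy statement: one must rule out that the contributions of the individual blocks of $\tL$s conspire to keep $\mu_t$ concentrated, uniformly over all admissible digit patterns of $t$, in particular for adversarial choices of block lengths and gaps. Some care is needed because a single block of $\tL$s, however long, contributes only $\LandauO(1)$ to the variance (for instance $V_{2^k-1}\to 4$ as $k\to\infty$): the estimate is driven by the \emph{number} of blocks, not their sizes, and the coupling between $\mu_t$ and $\mu_{t+1}$ in~\eqref{eqn_density_recurrence} must be controlled uniformly enough that long runs do not degrade the bounds. Finally, the Emme--Hubert central limit theorem~\eqref{eqn_EH}, holding only almost surely and with the extrinsic normalisation $\sqrt{\lambda/2}$, does not provide what is needed here; the point is precisely to obtain quantitative, uniform control of the moments with the intrinsic normalisation $\sqrt{V_t}$.
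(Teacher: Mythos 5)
Your strategy is genuinely different from the paper's, and the difference matters. The paper does not prove a central limit theorem for $Y_t/\sqrt{V_t}$. Instead it exploits the exact identity of Proposition~\ref{prp_ct_integral},
\[
c_t=\tfrac12+\tfrac12\,\delta(0,t)+\tfrac12\int_0^1\imagpart\gamma_t(\vartheta)\cot(\pi\vartheta)\,\mathrm d\vartheta,
\]
which already contains a nonnegative contribution $\delta(0,t)/2$ and reduces the problem to showing that the remaining integral is $>-2\varepsilon$ in absolute value. Near $\vartheta=0$ the integrand is controlled by a Taylor expansion of $\imagpart\gamma_t$ whose relevant coefficients are the \emph{odd} normalised moments $m_{2k+1}(t)$ plus a Cauchy--Schwarz remainder involving $m_{2K}(t)^{1/2}m_{2K+2}(t)^{1/2}$, and away from $\vartheta=0$ the modulus $|\gamma_t(\vartheta)|$ is beaten down by Lemma~\ref{lem_muntjak}. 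Crucially this requires only \emph{upper} bounds of the shape $|m_{2k+1}(t)|\ll_k r^k$ and $m_{2k}(t)\ll_k r^k$ with a factorial-small constant in $k$ (Proposition~\ref{prp_moment_bounds}); no lower bound on the variance and no asymptotic normality are ever invoked. So the paper entirely sidesteps the ``genuine non-degeneracy statement'' you flag as the main obstacle. Your identification of the technical core --- a moment recursion propagated along binary digits, coupling the pair $(\mu_t,\mu_{t+1})$, with the number of blocks as the right parameter --- is exactly right and matches the paper's machinery (the quantities $a_k(t)=m_k(t)+m_k(t+1)$, $b_k(t)=m_k(t)-m_k(t+1)$ and the matrices $M_0,M_1$); what differs is how the moment estimates are turned into a bound on $c_t$.

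Two gaps in your route as written. First, the method-of-moments step: you state moment \emph{bounds} $\mathbb E\,Y_t^{2k}\ll_k V_t^k$ and $|\mathbb E\,Y_t^{2k+1}|\ll_k V_t^k$, but then conclude that $\mathbb E\bigl[(Y_t/\sqrt{V_t})^m\bigr]$ \emph{converges} to the $m$-th Gaussian moment. Bounded even moments plus vanishing normalised odd moments give tightness and rule out skew tails, but do not force the subsequential limit to be Gaussian, so $P[Y_t\ge0]\to1/2$ does not follow. To run a genuine CLT you would need two-sided asymptotics $\widetilde m_{2k}(t)\sim(2k-1)!!\,V_t^k$, which is substantially more than Proposition~\ref{prp_moment_bounds} and more than you claim to extract, or you would need a Berry--Esseen-type smoothing argument from the characteristic function --- at which point you are effectively doing what the paper does. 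Second, the variance lower bound $V_t\gg r$ is indeed true and is the easier of your two claims; it can be read off from the exact formula~\eqref{eqn_second_moment_exact}: for the most significant $\tL$ of each block the factor $1-\sum_{d\ge1}\varepsilon_{i+d}2^{-d}$ is at least $1/2$, giving $m_2(t)\ge r/2$. This is fillable, but it is an \emph{extra} ingredient relative to the paper's argument, whose Fourier identity makes it unnecessary. Your counting for the ``in particular'' part is fine and matches the paper's.
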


The ``in particular''-part results from counting the number of integers with less than $L$ blocks of $\tL$s in its binary expansion.
A rough upper bound is given as follows:
up to $2^\lambda$, there are not more than $\lambda^{2L-2}$ many such natural numbers, since the length of each block of $\tL$s as well as the position of the least significant $\tL$ in each block is bounded by $\lambda$.

The error term $\log^\delta T$ should be compared to Drmota--Kauers--Spiegelhofer's~\cite{DKS2016} much weaker error term $T/\log T$.
Certainly, the statement $c_t>1/2-\varepsilon$ in Theorem~\ref{thm_main} is weaker than the bound $c_t>1/2$ in~\cite{DKS2016}, but the constant $1/2$ is optimal:
for all $\varepsilon>0$, we have $c_t<1/2+\varepsilon$ for almost all $t$ with respect to asymptotic density (this follows, as above, from~\cite{EH2018}, or from~\cite[Theorem~1]{DKS2016}).

From Theorem~\ref{thm_main} we also obtain~\eqref{eqn_EH_almost_surely} almost surely, since the measure of the set of $X\in\{0,1\}^{\mathbb N}$ having only finitely many blocks of $\tL$s is zero.

Moreover, we note that Theorem~\ref{thm_main} significantly sharpens the main theorem in the recent paper~\cite{S2019} by the author:
in that paper, it was proved that $c_t+c_{t'}>1-\varepsilon$ if $t$ contains many blocks of $\tL$s; here $t'=3\cdot 2^\lambda-t$, where $2^\lambda\leq t<2^{\lambda+1}$.
The new Theorem~\ref{thm_main} gives a bound for \emph{individual} values $c_t$.

Finally, we note that the proof presented below allows to explicitly compute a bound $L=L(\varepsilon)$ for Theorem~\ref{thm_main}.
This is the case since all of the implied constants appearing in the proof are effective.

\begin{notation}
In this paper, $0\in\mathbb N$.
For an integer $n>0$, we use the notation $\nu_2(n)$ to denote the largest $k$ such that $2^k\mid n$.
We will use Big O notation, employing the symbol $\LandauO$.
For an integer $t\geq 1$, the \emph{number of blocks in} $t$ is the number of blocks of $\tL$s in the binary expansion of $t$ plus the number of blocks of $\tO$s in the proper binary expansion of $t/2^{\nu_2(t)}$. Clearly, if $\ell$ is the number of blocks of $\tL$s in the binary expansion of $t$, then $2\ell-1$ is the number of blocks in $t$.
We also define the number of blocks in $0$ to be $0$.
The variable $r$ is used to denote the number of blocks in $t$.
We let $\e(x)$ denote $e^{2\pi ix}$ for real $x$, and $\lVert x\rVert=\min_{k\in\mathbb Z}\lvert x-k\rvert$ is the distance to the nearest integer.
For convenience, we define the maximum over an empty index set to be $0$.
\end{notation}

The remainder of this paper is dedicated to the proof of Theorem~\ref{thm_main}.

\section{Proof of the main theorem}
The proof consists of several steps.
We consider the characteristic function $\gamma_t$ of a certain probability distribution. Using the link between $\gamma_t$ and $c_t$ expressed by~\eqref{eqn_ct_rep}, we see that we have to find upper bounds for $\imagpart \gamma_t$.
We do so in two stages: for $\vartheta$ not close to $\mathbb Z$,
we estimate the absolute value of $\gamma_t(\vartheta)$ using a matrix identity from~\cite{MS2012}.
For $\vartheta$ close to $\mathbb Z$, we estimate the imaginary part of $\gamma_t(\vartheta)$ using the link~\eqref{eqn_imagpart_moments} to the \emph{moments} $m_k(t)$.
The principal part of the proof is concerned with finding upper bounds for these moments (captured in Proposition~\ref{prp_moment_bounds}),
thus extending the study performed by Emme and Prikhod'ko~\cite{EP2017}, and Emme and Hubert~\cite{EH2018,EH2018b}.
\subsection{Relating $c_t$ to a characteristic function}
For $\vartheta\in\mathbb R$ and $t\geq 0$, we define
\[\gamma_t(\vartheta)=\lim_{N\rightarrow\infty}\frac 1N\sum_{0\leq n<N}\e\bigl(\vartheta s(n+t)-\vartheta s(n)\bigr).\]
These limits exist, see B\'esineau~\cite{B1972},
and we have 
\begin{equation}\label{eqn_gamma_sum}
\gamma_t(\vartheta)=\sum_{-\infty\leq j<m}\delta(j,t)\e(j\vartheta)
\end{equation}
for some $m$ (which can be shown by induction easily).
We have
\begin{equation}\label{eqn:density_integral}
 \delta(j,t)
 =
 \int_0^1 \gamma_t(\vartheta) \e(-\vartheta j) \,\mathrm d\vartheta
\end{equation}
(see~\cite{MS2012}); using these identities and a geometric sum, we will prove the following identity.
\begin{proposition}\label{prp_ct_integral}
Let $t\geq 0$.
We have
\begin{equation}\label{eqn_ct_rep}
c_t = \frac 12 + \frac{\delta(0,t)}2 +
\frac 12
\int_0^1
\imagpart \gamma_t(\vartheta)\cot(\pi \vartheta) \ud \vartheta,
\end{equation}
where the integrand is a bounded function.
\end{proposition}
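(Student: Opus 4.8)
The plan is to start from the definition $c_t=\delta(0,t)+\delta(1,t)+\cdots=\sum_{j\geq 0}\delta(j,t)$ and substitute the integral representation~\eqref{eqn:density_integral} for each $\delta(j,t)$. Since by~\eqref{eqn_gamma_sum} the density $\delta(j,t)$ vanishes for $j\geq m$, the sum over $j\geq 0$ is in fact finite, so there is no convergence issue in exchanging sum and integral:
\[
c_t=\sum_{0\leq j<m}\int_0^1\gamma_t(\vartheta)\e(-\vartheta j)\ud\vartheta
=\int_0^1\gamma_t(\vartheta)\sum_{0\leq j<m}\e(-\vartheta j)\ud\vartheta.
\]
The inner geometric sum is $\sum_{0\leq j<m}\e(-\vartheta j)=\frac{1-\e(-m\vartheta)}{1-\e(-\vartheta)}$ for $\vartheta\notin\mathbb Z$. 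The next step is to extract from this kernel its "stationary" piece: I would write $\frac{1}{1-\e(-\vartheta)}=\frac12+\frac{1}{2}\cdot\frac{1+\e(-\vartheta)}{1-\e(-\vartheta)}=\frac12-\frac{i}{2}\cot(\pi\vartheta)$, using $\frac{1+\e(-\vartheta)}{1-\e(-\vartheta)}=-i\cot(\pi\vartheta)$ (which follows from $\e(-\vartheta)=\cos2\pi\vartheta-i\sin2\pi\vartheta$ and the half-angle identities). This splits the kernel into a constant part and a $\cot(\pi\vartheta)$ part.

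Carrying this through, $c_t=\frac12\int_0^1\gamma_t(\vartheta)(1-\e(-m\vartheta))\ud\vartheta-\frac{i}{2}\int_0^1\gamma_t(\vartheta)(1-\e(-m\vartheta))\cot(\pi\vartheta)\ud\vartheta$. For the first integral I would use $\int_0^1\gamma_t(\vartheta)\ud\vartheta=\delta(0,t)$ and $\int_0^1\gamma_t(\vartheta)\e(-m\vartheta)\ud\vartheta=\delta(m,t)=0$ (since $m$ is chosen so that $\delta(j,t)=0$ for $j\geq m$), leaving $\frac12\delta(0,t)$. For the second integral, the term $\int_0^1\gamma_t(\vartheta)\e(-m\vartheta)\cot(\pi\vartheta)\ud\vartheta$ should also be handled: $\gamma_t(\vartheta)\e(-m\vartheta)=\sum_{j<m}\delta(j,t)\e((j-m)\vartheta)$ is a trigonometric polynomial with only strictly negative frequencies, and I would check that integrating such a polynomial against $\cot(\pi\vartheta)$ gives something real (indeed, pairing the $\cot$ series $\cot(\pi\vartheta)=\tfrac1\pi\sum_{k\in\mathbb Z}\tfrac{1}{\vartheta-k}$ — or more cleanly, using that $\int_0^1\e(n\vartheta)\cot(\pi\vartheta)\ud\vartheta$ is a purely imaginary constant times $\operatorname{sgn}(n)$ for $n\neq 0$ and $0$ for $n=0$), so that multiplying by $-i/2$ and taking into account that $c_t$ is real forces this contribution to combine with the analogous one from $\gamma_t(\vartheta)$ itself. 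Cleanest is to note $c_t$ is real, so $c_t=\realpart c_t$; then $\realpart\bigl(-\tfrac i2\int\gamma_t\cot\bigr)=\tfrac12\int\imagpart\gamma_t(\vartheta)\cot(\pi\vartheta)\ud\vartheta$ once one observes the $\e(-m\vartheta)$-twisted term contributes only to $\imagpart c_t=0$. This yields exactly~\eqref{eqn_ct_rep}.

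Finally, I must justify that the integrand $\imagpart\gamma_t(\vartheta)\cot(\pi\vartheta)$ is bounded, i.e.\ that the apparent pole of $\cot$ at $\vartheta=0$ (and $\vartheta=1$) is cancelled. This follows because $\gamma_t(0)=\sum_j\delta(j,t)=1$ is real, so $\imagpart\gamma_t(\vartheta)\to 0$ as $\vartheta\to 0$; more precisely $\gamma_t$ is a trigonometric polynomial (by~\eqref{eqn_gamma_sum}, a finite sum), hence smooth, and $\imagpart\gamma_t(\vartheta)=\imagpart\gamma_t(0)+\LandauO(\vartheta)=\LandauO(\vartheta)$ near $0$, which kills the simple pole $\cot(\pi\vartheta)=\LandauO(1/\vartheta)$; the point $\vartheta=1$ is symmetric. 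The main obstacle I anticipate is purely bookkeeping: correctly tracking real versus imaginary parts through the geometric-sum manipulation and making sure the auxiliary term involving $\e(-m\vartheta)$ (and the choice of $m$) drops out cleanly rather than leaving a spurious real contribution — a careful appeal to the reality of $c_t$ and to $\delta(j,t)=0$ for $j\geq m$ is what makes this work.
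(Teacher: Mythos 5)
Your plan follows the same route as the paper up to equation~\eqref{eqn_ct_summation} and the decomposition $\frac{1}{1-\e(-\vartheta)}=\frac12-\frac{i}{2}\cot(\pi\vartheta)$, but the final step contains a genuine error. You assert that the $\e(-m\vartheta)$-twisted term ``contributes only to $\imagpart c_t=0$'' and can therefore be dropped; this is false. Using your own observation that $\int_0^1\e(n\vartheta)\cot(\pi\vartheta)\,\mathrm d\vartheta$ equals a purely \emph{imaginary} constant times $\mathrm{sgn}(n)$ (namely $i\,\mathrm{sgn}(n)$, taken in principal-value sense), one finds that $\int_0^1\gamma_t(\vartheta)\e(-m\vartheta)\cot(\pi\vartheta)\,\mathrm d\vartheta=\sum_{\ell\ge1}a_\ell\cdot(-i)=-i$, so after multiplying by the prefactor $\frac{i}{2}$ this term contributes exactly $\frac12$ to the \emph{real} part of $c_t$ --- and this is precisely the standalone $\frac12$ in~\eqref{eqn_ct_rep}. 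Discarding it, as you propose, yields $c_t=\frac{\delta(0,t)}2+\frac12\int_0^1\imagpart\gamma_t(\vartheta)\cot(\pi\vartheta)\,\mathrm d\vartheta$, which is off by $\frac12$. (You also mislabel this twisted integral as ``real'' in the same sentence where you say it is ``purely imaginary times $\mathrm{sgn}(n)$''; the two statements cannot both hold.) This is exactly the work that the paper's auxiliary lemma $\int_0^1\sin(2\pi k\vartheta)\cot(\pi\vartheta)\,\mathrm d\vartheta=1$ ($k\ge1$) is doing: feeding $\imagpart\big(\gamma_t(\vartheta)\e(-m\vartheta)\big)=\sum_{\ell\ge1}a_\ell\sin(-2\pi\ell\vartheta)$ through it produces $\frac12\sum_\ell a_\ell=\frac12$.

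A secondary issue, worth flagging even though it is repairable: in splitting $-\frac i2\int_0^1\gamma_t(\vartheta)\cot(\pi\vartheta)\,\mathrm d\vartheta$ and then taking real parts, you implicitly treat $\int_0^1\realpart\gamma_t(\vartheta)\cot(\pi\vartheta)\,\mathrm d\vartheta$ as a well-defined (and vanishing) quantity; but $\realpart\gamma_t(0)=1$, so $\realpart\gamma_t(\vartheta)\cot(\pi\vartheta)$ has a non-integrable simple pole at $\vartheta=0$ and $\vartheta=1$, and this integral does not exist absolutely --- only as a principal value (vanishing by the symmetry $\realpart\gamma_t(1-\vartheta)=\realpart\gamma_t(\vartheta)$, $\cot(\pi(1-\vartheta))=-\cot(\pi\vartheta)$). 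The paper sidesteps this by never separating $\realpart\gamma_t$ from $\cot$: each of the three summands in~\eqref{eqn_ct_expansion} is arranged to be a genuinely bounded function on $(0,1)$ before the integral is split, and the dominated-convergence / uniform-bound argument at the end is what justifies interchanging $\sum_\ell$ and $\int$. If you want to keep your intermediate splitting, you need to either invoke a principal-value interpretation explicitly or regroup as the paper does.
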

In the proof of this statement,
we are also going to use the following fact.
\begin{lemma}
For $k\geq 1$ we have
\begin{equation}\label{eqn:integral_identity}
\int_0^1
\sin(2\pi k\vartheta)\cot(\pi\vartheta)\ud\vartheta
=1
,
\end{equation}
where the integrand is bounded on $(0,1)$.
\end{lemma}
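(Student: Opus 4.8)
The plan is to evaluate the integral $\int_0^1 \sin(2\pi k\vartheta)\cot(\pi\vartheta)\,\ud\vartheta$ by first rewriting $\sin(2\pi k\vartheta)\cot(\pi\vartheta)$ as a finite trigonometric sum and then integrating term by term over the period. Concretely, I would use the identity
\[
\sin(2\pi k\vartheta)\cot(\pi\vartheta)
=
1 + 2\sum_{j=1}^{k-1}\cos(2\pi j\vartheta)
,
\]
which can be proved by a short induction on $k$: for $k=1$ it reads $\sin(2\pi\vartheta)\cot(\pi\vartheta) = 2\cos(\pi\vartheta)^2 = 1 + \cos(2\pi\vartheta)$, and the inductive step follows from the product-to-sum formula $\sin(2\pi(k+1)\vartheta) - \sin(2\pi(k-1)\vartheta) = 2\cos(2\pi k\vartheta)\sin(2\pi\vartheta)$ after dividing by $\sin(\pi\vartheta)$ and using $\sin(2\pi\vartheta)/\sin(\pi\vartheta) = 2\cos(\pi\vartheta)$. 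Alternatively one may observe that $\sin(2\pi k\vartheta)/\sin(\pi\vartheta)$ is, up to the factor $\cos(\pi\vartheta)$ absorbed above, the Dirichlet-type kernel $\sum_{|j|<k}\e(j\vartheta)$ evaluated through the geometric-sum identity already invoked in the paper.

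Once this identity is in place, the conclusion is immediate: integrating over $[0,1]$, the constant term contributes $1$ and each $\int_0^1 \cos(2\pi j\vartheta)\,\ud\vartheta$ vanishes for $j\geq 1$, giving the value $1$. The boundedness of the integrand on $(0,1)$ is then also transparent from the right-hand side, since $1 + 2\sum_{j=1}^{k-1}\cos(2\pi j\vartheta)$ is a finite sum of bounded functions; near the endpoints $\vartheta = 0$ and $\vartheta = 1$ the apparent singularity of $\cot(\pi\vartheta)$ is cancelled because $\sin(2\pi k\vartheta)$ has a zero there, and the product extends continuously with value $2k$ at $\vartheta = 0$ and $(-1)$-type behaviour consistent with the sum representation. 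I would state this boundedness as a consequence of the closed form rather than estimate it directly.

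I do not anticipate a genuine obstacle here; the only point requiring minor care is justifying the interchange of integration and the (finite) summation, which is trivial, and confirming that the function $\sin(2\pi k\vartheta)\cot(\pi\vartheta)$, defined a priori only on $(0,1)$, is the restriction of the continuous trigonometric polynomial on the right — this is what legitimises writing the integral at all. If one prefers to avoid the induction, an equally clean route is to write $\cot(\pi\vartheta) = \cos(\pi\vartheta)/\sin(\pi\vartheta)$ and $\sin(2\pi k\vartheta) = \sin(\pi\vartheta)\cdot U_{k-1}(\cos\pi\vartheta)\cdot 2\cos(\pi\vartheta)/(2\cos\pi\vartheta)$ — but in practice the geometric-sum identity $\sum_{j=-(k-1)}^{k-1}\e(j\vartheta) = \sin(2\pi k\vartheta)/\sin(\pi\vartheta) \cdot (\text{correction})$ already used elsewhere in the paper makes the first approach the most economical, and that is the one I would write out.
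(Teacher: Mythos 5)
Your approach — rewrite $\sin(2\pi k\vartheta)\cot(\pi\vartheta)$ as a finite trigonometric polynomial and integrate term by term over the period — is sound and is in the same spirit as the paper's proof; the paper proves the one-step recurrence
\[
\sin(2\pi k\vartheta)\cot(\pi\vartheta)=\sin\bigl(2\pi(k-1)\vartheta\bigr)\cot(\pi\vartheta)+\cos(2\pi k\vartheta)+\cos\bigl(2\pi(k-1)\vartheta\bigr)
\]
and runs the induction directly at the level of the integral, whereas you unroll it into a closed form first. However, the closed form you wrote down is wrong: the correct identity is
\[
\sin(2\pi k\vartheta)\cot(\pi\vartheta)=1+2\sum_{j=1}^{k-1}\cos(2\pi j\vartheta)+\cos(2\pi k\vartheta),
\]
which one can check by noting $2\sin(2\pi k\vartheta)\cos(\pi\vartheta)=\sin((2k+1)\pi\vartheta)+\sin((2k-1)\pi\vartheta)$, i.e.\ the left side is the average $\tfrac12\bigl(D_k(2\pi\vartheta)+D_{k-1}(2\pi\vartheta)\bigr)$ of two Dirichlet kernels. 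Your own base case already contradicts your stated formula: for $k=1$ you correctly compute the left side as $1+\cos(2\pi\vartheta)$, but your right side is the empty sum $1$. Likewise, letting $\vartheta\to 0$ the left side tends to $2k$, while your right side tends to $2k-1$. Since the dropped term $\cos(2\pi k\vartheta)$ integrates to $0$ over $[0,1]$, your final conclusion $\int_0^1 = 1$ is unaffected, and the boundedness claim also survives, so this is a recoverable slip rather than a fatal gap — but the identity as stated is false and needs the extra $\cos(2\pi k\vartheta)$ term. (Also, your inductive step jumps from $k-1$ to $k+1$, so strictly you need both $k=1$ and $k=2$ as base cases, and "dividing by $\sin(\pi\vartheta)$" should be "multiplying by $\cot(\pi\vartheta)$"; these are only cosmetic.)
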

\begin{proof}
For $k\geq 1$ and $\vartheta\in(0,1)$, we have
\begin{align*}
&\sin(2\pi k\vartheta)\cot(\pi\vartheta)
\\&=
\sin\bigl(2\pi(k-1)\vartheta\bigr)\cos(2\pi\vartheta)\cot(\pi\vartheta)
+
\cos\bigl(2\pi(k-1)\vartheta\bigr)\sin(2\pi\vartheta)\cot(\pi\vartheta)
\\&=
\sin\bigl(2\pi(k-1)\vartheta\bigr)\bigl(1-2\sin^2(\pi\vartheta)\bigr)\frac{\cos(\pi\vartheta)}{\sin(\pi\vartheta)}
+
2\cos\bigl(2\pi(k-1)\vartheta\bigr)\sin(\pi\vartheta)\cos(\pi\vartheta)\frac{\cos(\pi\vartheta)}{\sin(\pi\vartheta)}
\\&=
\sin\bigl(2\pi(k-1)\vartheta\bigr)\cot(\pi\vartheta)
-
\sin\bigl(2\pi(k-1)\vartheta\bigr)\sin(2\pi\vartheta)
+
\cos\bigl(2\pi(k-1)\vartheta\bigr)\bigl(\cos(2\pi\vartheta)+1\bigr)
\\[2mm]
&=
\sin\bigl(2\pi(k-1)\vartheta\bigr)\cot(\pi\vartheta)
+
\cos(2\pi k\vartheta)
+
\cos\bigl(2\pi(k-1)\vartheta\bigr).
\end{align*}

Assume first that $k=1$.
The first summand is identically zero on $(0,1)$,
the integral from $0$ to $1$ of the second summand equals zero,
and the third summand is identically $1$.
For $k\geq 2$ the first summand is bounded by the induction hypothesis and its contribution to the integral is $1$.
The other summands contribute nothing to the integral.
The statement is therefore proved.
\end{proof}
\begin{proof}[Proof of Proposition \ref{prp_ct_integral}]
Let $m$ be so large that~\eqref{eqn_gamma_sum} holds.
Necessarily we have $\delta(j,t)=0$ for $j\geq m$.
It follows from (\ref{eqn:density_integral}) that
\begin{equation}\label{eqn_ct_summation}
c_t
=\sum_{0\leq j<m}\delta(j,t)=\int_0^1
\gamma_t(\vartheta)
\sum_{0\leq j<m} \e(-j\vartheta)\ud\vartheta
=
\int_0^1
\realpart
\gamma_t(\vartheta)
\frac{1-\e(-m\vartheta)}{1-\e(-\vartheta)}
\ud\vartheta;
\end{equation}
we have the formulas
\begin{equation}\label{eqn_cot_introduction}
\realpart \frac 1{1-\e(-\vartheta)}=\frac 12\quad\mbox{and}\quad
\imagpart \frac 1{1-\e(-\vartheta)}=\frac 12\cot(-\pi\vartheta).
\end{equation}

Since $\delta(j,t)=\LandauO\left(2^{-\lvert j\rvert}\right)$ for $j\rightarrow\infty$ (where the implied constant depends on $t$), we have
$\imagpart \gamma_t(\vartheta)=\sum_{k\in\mathbb Z}\delta(j,t)\sin(2\pi k\vartheta)=\LandauO(\vartheta)$ for $\vartheta\rightarrow 0$;
also, $\cot(-\pi\vartheta)=\LandauO(1/\vartheta)$.
Equations~\eqref{eqn_ct_summation} and~\eqref{eqn_cot_introduction} imply
\begin{equation}\label{eqn_ct_expansion}
\begin{aligned}
c_t
&=
\int_0^1 
\realpart
\frac{\gamma_t(\vartheta)}{1-\e(-\vartheta)}
-
\realpart
\frac{\gamma_t(\vartheta)\e(-m\vartheta)}{1-\e(-\vartheta)}
\ud\vartheta
\\&=
\int_0^1 
\realpart
\frac{\gamma_t(\vartheta)}{1-\e(-\vartheta)}
-\frac 12\realpart \bigl(\gamma_t(\vartheta)\e(-m\vartheta)\bigr)
\\&\hspace{10em}+
\frac 12\imagpart \bigl(\gamma_t(\vartheta)\e(-m\vartheta)\bigr)\cot(-\pi\vartheta)
\ud\vartheta
,
\end{aligned}
\end{equation}
where all occurring summands are bounded functions.
Since $\sum_{j<m}\delta(j,t)=1$, it follows that
\[\gamma_t(\vartheta)\e(-m\vartheta)
=\sum_{\ell\geq 1}a_\ell \e(-\ell\vartheta)
\]
for some nonnegative $a_\ell$ such that $\sum_{\ell\geq 1}a_\ell=1$.
Since $m$ is large enough, the integral over the second summand in the second line of~\eqref{eqn_ct_expansion} is zero.
We obtain
\[    c_t=\int_0^1 \realpart\frac{\gamma_t(\vartheta)}{1-\e(-\vartheta)}
+\frac 12\sum_{\ell\geq 1}
a_\ell\sin(-2\pi\ell\vartheta)\cot(-\pi\vartheta)\ud\vartheta.    \]
The partial sums $\sum_{1\leq \ell<L}a_\ell\sin(-2\pi\ell\vartheta)\cot(-\pi\vartheta)\ud\vartheta$ are bounded, uniformly in $L$, by an integrable function on $[0,1]$, therefore the statement follows by interchanging the summation and the integral, an application of the identity (\ref{eqn:integral_identity}), and another application of~\eqref{eqn_cot_introduction}.
\end{proof}
\subsection{Moments of $\mu_t$}
Define
\[\widetilde m_k(t)=\sum_{j\in\mathbb Z}\delta(j,t)j^k.\]
The moment generating function is
\[M_t(x)=\sum_{k\geq 0}\frac{\widetilde m_k(t)}{k!}x^k.\]
The moments exist and the moment generating function is convergent for $t=1$ and $\lvert x\rvert<\log 2$;
this is just a geometric distribution and we obtain
\begin{equation}\label{eqn_M1}
M_1(x)=\frac{e^x}{2-e^{-x}}=1+x^2-x^3+\frac{19}{12}x^4-\cdots
\end{equation}
(see entry \texttt{A052841} in Sloane's OEIS\footnote{\texttt{http://oeis.org}}\ ).
By basic analytic combinatorics~\cite{FS2009}, we obtain
\begin{equation}\label{eqn_moment_1_asymp}
m_k(t)\sim c(\log 2)^{-k}
\end{equation} for some absolute $c$, as $k\rightarrow\infty$.

For $t\geq 2$, we note that the recurrence relation~\eqref{eqn_density_recurrence} implies that $\delta(j,t)=c2^j$ for $j<-\lambda$, where $2^\lambda\leq t<2^{\lambda+1}$. 
This implies that $\widetilde m_k(t)$ exists for all $k$ and $\lvert \widetilde m_k(t)\rvert \ll \lambda^k + \widetilde m_k(1)$.
Considering also the series for the exponential function and the asymptotic estimate~\eqref{eqn_moment_1_asymp}, we see that the series for $M_t(x)$ is convergent as long as $\lvert x\rvert<\log 2$.

From~\eqref{eqn_density_recurrence} we wish to derive a recurrence for the moment generating functions.
We define
\[m_k(t)=\frac{\widetilde m_k(t)}{k!},\] such that $M_k(x)=\sum_{k\geq 0}m_k(t)x^k$.
\begin{lemma}
Assume that $\lvert x\rvert<\log 2$ and $t\geq 0$.
We have
\begin{equation}\label{eqn_MGF_recurrence}
\begin{aligned}
M_{2t}(x)&=M_t(x)\textrm{ and}\\
M_{2t+1}(x)&=\frac{e^x}2 M_t(x)+\frac{e^{-x}}2 M_{t+1}(x).
\end{aligned}
\end{equation}
In particular,
\begin{equation}\label{eqn_moments_recurrence}
\begin{aligned}
m_k(2t)&=m_k(t) \textrm{ and}\\
m_k(2t+1)&=\frac 12\sum_{0\leq \ell\leq k} \frac 1{\ell!}\bigl(m_{k-\ell}(t)+(-1)^\ell m_{k-\ell}(t+1)\bigr).
\end{aligned}
\end{equation}
\end{lemma}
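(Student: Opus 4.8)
The plan is to derive both assertions directly from the density recurrence~\eqref{eqn_density_recurrence}, treating the moment generating function identity first and then reading off the coefficient recurrence. First I would recall that $M_t(x)=\sum_{j\in\mathbb Z}\delta(j,t)e^{jx}$, which is legitimate for $\lvert x\rvert<\log 2$ by the convergence discussion preceding the lemma (the tail $\delta(j,t)\asymp 2^j$ for $j<-\lambda$ controls the negative $j$, and $\delta(j,t)=0$ for $j\geq m$ handles the positive side). With this representation, the first line $M_{2t}(x)=M_t(x)$ is immediate from $\delta(j,2t)=\delta(j,t)$. For the second line, I would substitute $\delta(j,2t+1)=\tfrac12\delta(j-1,t)+\tfrac12\delta(j+1,t+1)$ into the defining series, then shift the summation index in each of the two pieces: $\sum_j \delta(j-1,t)e^{jx}=e^x\sum_i\delta(i,t)e^{ix}=e^xM_t(x)$ and $\sum_j\delta(j+1,t+1)e^{jx}=e^{-x}\sum_i\delta(i,t+1)e^{ix}=e^{-x}M_{t+1}(x)$, which together give~\eqref{eqn_MGF_recurrence}. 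The index shift is valid because the series converges absolutely in the specified range, so rearrangement is harmless.

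Next I would pass from~\eqref{eqn_MGF_recurrence} to the coefficient recurrence~\eqref{eqn_moments_recurrence} by comparing Taylor coefficients in $x$. The first line is trivial. For the second, I would write $e^x=\sum_{\ell\geq 0}x^\ell/\ell!$ and $e^{-x}=\sum_{\ell\geq 0}(-1)^\ell x^\ell/\ell!$, multiply the Cauchy products $\tfrac12 e^xM_t(x)$ and $\tfrac12 e^{-x}M_{t+1}(x)$ out, and extract the coefficient of $x^k$; this yields exactly $\tfrac12\sum_{0\leq\ell\leq k}\tfrac1{\ell!}\bigl(m_{k-\ell}(t)+(-1)^\ell m_{k-\ell}(t+1)\bigr)$, using $m_k(t)=\widetilde m_k(t)/k!$ as defined. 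All sums here are finite, so no further convergence issue arises. One small bookkeeping remark I would include: the product of two power series each convergent for $\lvert x\rvert<\log 2$ with $e^{\pm x}$ (which is entire) is again a power series with the same radius of convergence, so the identification of coefficients is justified on the whole disc $\lvert x\rvert<\log 2$.

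I do not expect a serious obstacle here; the entire content is the index shift in the bilateral series and the Cauchy product. The only point demanding a word of care is making sure the exchange of summation orders and the coefficient extraction are licensed, i.e.\ that the relevant series converge absolutely for $\lvert x\rvert<\log 2$ — and that has already been established in the paragraph just before the lemma via the bound $\lvert\widetilde m_k(t)\rvert\ll\lambda^k+\widetilde m_k(1)$ together with the asymptotic~\eqref{eqn_moment_1_asymp}. So the proof is essentially a short computation, and I would present it in two short paragraphs mirroring the two displayed equations in the lemma statement.

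\begin{proof}
Since $\lvert x\rvert<\log 2$, the discussion preceding the lemma shows that the series $M_t(x)=\sum_{j\in\mathbb Z}\delta(j,t)e^{jx}$ converges absolutely. Using the recurrence~\eqref{eqn_density_recurrence}, we have $M_{2t}(x)=\sum_{j\in\mathbb Z}\delta(j,2t)e^{jx}=\sum_{j\in\mathbb Z}\delta(j,t)e^{jx}=M_t(x)$, which is the first identity in~\eqref{eqn_MGF_recurrence}. For the second, substituting $\delta(j,2t+1)=\tfrac12\delta(j-1,t)+\tfrac12\delta(j+1,t+1)$ and shifting the summation index in each piece gives
\[
M_{2t+1}(x)=\frac12\sum_{j\in\mathbb Z}\delta(j-1,t)e^{jx}+\frac12\sum_{j\in\mathbb Z}\delta(j+1,t+1)e^{jx}=\frac{e^x}2 M_t(x)+\frac{e^{-x}}2 M_{t+1}(x),
\]
where the rearrangement is justified by absolute convergence. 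This proves~\eqref{eqn_MGF_recurrence}.

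To obtain~\eqref{eqn_moments_recurrence}, we compare coefficients of $x^k$ in~\eqref{eqn_MGF_recurrence}, recalling $M_t(x)=\sum_{k\geq 0}m_k(t)x^k$ with $m_k(t)=\widetilde m_k(t)/k!$. The first line is immediate. For the second, we expand $e^x=\sum_{\ell\geq 0}x^\ell/\ell!$ and $e^{-x}=\sum_{\ell\geq 0}(-1)^\ell x^\ell/\ell!$ and take the Cauchy product with $M_t(x)$ and $M_{t+1}(x)$ respectively. The coefficient of $x^k$ in $\tfrac12 e^xM_t(x)$ is $\tfrac12\sum_{0\leq\ell\leq k}\tfrac1{\ell!}m_{k-\ell}(t)$, and the coefficient of $x^k$ in $\tfrac12 e^{-x}M_{t+1}(x)$ is $\tfrac12\sum_{0\leq\ell\leq k}\tfrac{(-1)^\ell}{\ell!}m_{k-\ell}(t+1)$. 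Adding these yields
\[
m_k(2t+1)=\frac12\sum_{0\leq\ell\leq k}\frac1{\ell!}\bigl(m_{k-\ell}(t)+(-1)^\ell m_{k-\ell}(t+1)\bigr),
\]
as claimed. Since each of these sums is finite, no convergence issue arises beyond that already established for $M_t$, $M_{t+1}$ on the disc $\lvert x\rvert<\log 2$.
\end{proof}
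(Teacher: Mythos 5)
Your argument is correct and is essentially identical to the paper's: both establish \eqref{eqn_MGF_recurrence} by substituting the density recurrence into the bilateral series $M_t(x)=\sum_{j\in\mathbb Z}\delta(j,t)e^{jx}$ and performing an index shift, and both deduce \eqref{eqn_moments_recurrence} by expanding Cauchy products. The extra remarks on absolute convergence are sensible bookkeeping but do not change the substance.
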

Note that setting $t=0$, we obtain~\eqref{eqn_M1}.
\begin{proof}
The first line of~\eqref{eqn_MGF_recurrence} is trivial since $\delta(j,2t)=\delta(j,t)$.
Concerning the second line, we have
\[M_t(x)=\sum_{j\in\mathbb Z}\delta(j,t)e^{jx},\]
therefore by~\eqref{eqn_density_recurrence}
\begin{align*}
M_{2t+1}(x)&=\sum_{j\in\mathbb Z}\delta(j,2t+1)e^{jx}
=\frac 12 \sum_{j\in\mathbb Z}\delta(j-1,t)e^{jx}
+\frac12\sum_{j\in\mathbb Z}\delta(j+1,t+1)e^{jx}
\\&=\frac{e^{x}}2M_t(x)+\frac{e^{-x}}2M_{t+1}(x)
\end{align*}
after a shift of indices.
The ``in particular''-part follows from expanding Cauchy products.
\end{proof}

The first few moments are as follows: $m_0(t)=1$, $m_1(t)=0$, 
and $m_2(t)$ satisfies the recurrence
\begin{align*}
m_2(0)=0,\quad
m_2(1)=1,\quad
m_2(2t)=m_2(t),\quad
m_2(2t+1)=\frac{m_2(t)+m_2(t+1)+1}2.
\end{align*}
This particular sequence also arises in a different context: it is the \emph{star-discrepancy} of the Van der Corput sequence in base $2$~\cite{DLP2005,S2018}. 
It is known that
\begin{equation}\label{eqn_BF}
m_2(t)\leq \frac{\log t}{3\log 2}+1
\end{equation}
for all $t\geq 1$ (see Bejian and Faure~\cite{BF1978}).
The following interesting exact representation of $m_2(t)$ follows from Pro\u{\i}nov and Atanassov~\cite{PA1988}, and Beck~\cite{B2014} (as was pointed out to the author by the anonymous referee of the article~\cite{S2018}); see the remark after~\cite[Corollary~2.5]{S2018}:
if $t=\sum_{0\leq i\leq \nu}\varepsilon_i2^i$ with $\varepsilon_i\in\{0,1\}$, we have
\begin{equation}\label{eqn_second_moment_exact}
m_2(t)=\sum_{0\leq i\leq\nu}\varepsilon_i-\sum_{0\leq i<j\leq \nu}\varepsilon_i\varepsilon_j2^{i-j}.
\end{equation}
At this point we wish to emphasize the usefulness of the moments as opposed to $\delta(j,t)$.
In order to compute $\delta(j,t)$, we need to consider values $\delta(j+\ell,t')$ for large $\ell$ (depending on the number of $\tL$s and $\tO$s in the binary expansion of $t/2^{\nu_2(t)}$); for computing $m_k(t)$ we only need to consider
moments $m_i(t')$ for $i\leq k$.
In particular, $m_k$ is a $2$-\emph{regular} sequence~\cite{AS1992}, while this is not so clear and perhaps wrong for $\delta(j,\cdot)$.

Using Chebyshev's inequality and the bound~\eqref{eqn_BF} for $m_2(t)$
we can already find a nontrivial bound related to Cusick's conjecture:
with $\sigma=\sqrt{(\log t)/(3\log 2)+1}$ we have
$\sum_{\lvert j\rvert\leq K\sigma}\delta(j,t)\geq 1-1/K^2$.
In particular, choosing $K$ close to $\sqrt{2}$, we obtain
\begin{equation}\label{eqn_cusick_shifted}
\sum_{j\geq -\sqrt{\log t}-1}\delta(j,t)\geq 
\sum_{j\geq -\sqrt{\log t+2}}\delta(j,t)>1/2
\end{equation}
for $t\geq 1$.

We are going to establish recurrences for the values
\begin{align*}
a_k(t)&=m_k(t)+m_k(t+1),\\
b_k(t)&=m_k(t)-m_k(t+1),
\end{align*}
and the corresponding generating functions
\[F_t(x)=\sum_{k\geq 0}a_k(t)x^k\quad\mbox{and}\quad
G_t(x)=\sum_{k\geq 0}b_k(t)x^k.\]
By~\eqref{eqn_moments_recurrence}, we have
\begin{align*}
2a_k(2t)&=2m_k(t)+m_k(t)+m_k(t+1)
+\sum_{\substack{2\leq \ell\leq k\\2\mid \ell}}\frac 1{\ell!}a_{k-\ell}(t)
+\sum_{\substack{1\leq \ell\leq k\\2\nmid\ell}}\frac 1{\ell!}b_{k-\ell}(t)
\\
&=a_k(t)
+\sum_{\substack{0\leq \ell\leq k\\2\mid \ell}}\frac 1{\ell!}a_{k-\ell}(t)
+b_k(t)+\sum_{\substack{1\leq \ell\leq k\\2\nmid\ell}}\frac 1{\ell!}b_{k-\ell}(t)
\end{align*}
and
\begin{align*}
2b_k(2t)&=2m_k(t)-m_k(t)-m_k(t+1)
-\sum_{\substack{2\leq \ell\leq k\\2\mid \ell}}\frac 1{\ell!}a_{k-\ell}(t)
-\sum_{\substack{1\leq \ell\leq k\\2\nmid\ell}}\frac 1{\ell!}b_{k-\ell}(t)
\\
&=a_k(t)
-\sum_{\substack{0\leq \ell\leq k\\2\mid \ell}}\frac 1{\ell!}a_{k-\ell}(t)
+b_k(t)-\sum_{\substack{1\leq \ell\leq k\\2\nmid\ell}}\frac 1{\ell!}b_{k-\ell}(t).
\end{align*}
We want to write this as a matrix recurrence;
we define
\begin{align*}
C(x)&=\cosh(x)=\frac 12\left(e^x+e^{-x}\right)
=\sum_{j\geq 0}\frac{x^{2j}}{(2j)!};\\
S(x)&=\sinh(x)=\frac 12\left(e^x-e^{-x}\right)=\sum_{j\geq 0}\frac{x^{2j+1}}{(2j+1)!}.
\end{align*}
We are concerned with the matrix
\[M_0=\frac 12(A_0+B_0),\]
where
\[  A_0=\left(\begin{matrix}C(x)&S(x)\\-C(x)&-S(x)\end{matrix}\right)
\quad\mbox{and}\quad B_0=\left(\begin{matrix}1&1\\1&1\end{matrix}\right).  \]

We also study appending $\tL$ to the binary expansion.
\begin{align*}
2a_k(2t+1)&=m_k(t)+m_k(t+1)+2m_k(t+1)
+\sum_{\substack{2\leq \ell\leq k\\2\mid \ell}}\frac 1{\ell!}a_{k-\ell}(t)
+\sum_{\substack{1\leq \ell\leq k\\2\nmid\ell}}\frac 1{\ell!}b_{k-\ell}(t)
\\
&=a_k(t)
+\sum_{\substack{0\leq \ell\leq k\\2\mid \ell}}\frac 1{\ell!}a_{k-\ell}(t)
-b_k(t)+\sum_{\substack{1\leq \ell\leq k\\2\nmid\ell}}\frac 1{\ell!}b_{k-\ell}(t)
\end{align*}
and
\begin{align*}
2b_k(2t+1)&=m_k(t)+m_k(t+1)-2m_k(t+1)
+\sum_{\substack{2\leq \ell\leq k\\2\mid \ell}}\frac 1{\ell!}a_{k-\ell}(t)
+\sum_{\substack{1\leq \ell\leq k\\2\nmid\ell}}\frac 1{\ell!}b_{k-\ell}(t)
\\
&=-a_k(t)+
\sum_{\substack{0\leq \ell\leq k\\2\mid \ell}}\frac 1{\ell!}a_{k-\ell}(t)
+b_k(t)+\sum_{\substack{1\leq \ell\leq k\\2\nmid\ell}}\frac 1{\ell!}b_{k-\ell}(t).
\end{align*}
Clearly, we are interested in the matrix
\[M_1=\frac 12(A_1+B_1),\]
where
\[  A_1=\left(\begin{matrix}C(x)&S(x)\\C(x)&S(x)\end{matrix}\right)
\quad\mbox{and}\quad
B_1=\left(\begin{matrix}1&-1\\-1&1\end{matrix}\right).  \]
Using Cauchy products and the recurrence formula~\eqref{eqn_moments_recurrence},
we see that
\begin{equation}\label{eqn_momentcoeff_rec}
\begin{aligned}
\left(\begin{matrix}F_{2t}(x)\\G_{2t}(x)\end{matrix}\right)
&=M_0
\left(\begin{matrix}F_t(x)\\G_t(x)\end{matrix}\right)
\quad\mbox{and}\quad
\left(\begin{matrix}F_{2t+1}(x)\\G_{2t+1}(x)\end{matrix}\right)
=M_1
\left(\begin{matrix}F_t(x)\\G_t(x)\end{matrix}\right).
\end{aligned}
\end{equation}
Note that these identities are also valid for $t=0$
(since~\eqref{eqn_moments_recurrence} is also valid for $t=0$).

\subsection{Estimating the characteristic function using moments}
For $\vartheta\leq \vartheta_0$, where $\vartheta_0$ is defined later, we will use a representation of $\imagpart \gamma_t(\vartheta)$ in terms of moments.
For this, we use Taylor approximation of the sine function and~\eqref{eqn_gamma_sum}:
for all $j\in\mathbb Z$ there exists $\xi_j$ between $0$ and $2\pi \vartheta$ such that 

\begin{align*}
\imagpart\gamma_t(\vartheta)&=\imagpart\sum_{j\in\mathbb Z}\delta(j,t)\e(j\vartheta)=\sum_{j\in\mathbb Z}\delta(j,t)\sin(2\pi j\vartheta)\\
&=\sum_{j\in\mathbb Z}\delta(j,t)\sum_{0\leq k< K}
\frac{(-1)^k}{(2k+1)!}(2\pi\vartheta)^{2k+1}j^{2k+1}
+\sum_{j\in \mathbb Z}\frac {\delta(j,t)(-1)^K}{(2K+1)!}j^{2K+1}\xi_j^{2K+1}
\end{align*}
and therefore
\[
\left\lvert\imagpart\gamma_t(\vartheta)\right\rvert
\leq
\sum_{0\leq k<K}(2\pi \vartheta)^{2k+1}\left\lvert m_{2k+1}(t)\right\rvert
+\frac{(2\pi\vartheta)^{2K+1}}{(2K+1)!}
\sum_{j\in \mathbb Z}\delta(j,t)\lvert j\rvert^{2K+1}.
\]
Applying the Cauchy--Schwarz inequality to the sum
\[
\sum_{j\in\mathbb Z}\delta(j,t)\lvert j\rvert^{2K+1}
=
\sum_{j\in\mathbb Z}\sqrt{\delta(j,t)}\lvert j\rvert^K
\sqrt{\delta(j,t)} \lvert j\rvert^{K+1},
\]
we obtain
\begin{equation}\label{eqn_imagpart_moments}
\begin{aligned}
\bigl\lvert \imagpart \gamma_t(\vartheta)\bigr\rvert
&\leq
\sum_{0\leq k<K}(2\pi \vartheta)^{2k+1}\left\lvert m_{2k+1}(t)\right\rvert
\\&+(2\pi\vartheta)^{2K+1}\frac{\sqrt{(2K)!(2K+2)!}}{(2K+1)!}m_{2K}(t)^{1/2}m_{2K+2}(t)^{1/2}.
\end{aligned}
\end{equation}
The moments $m_{2K}$ and $m_{2K+2}$ will give us a factor $(K!(K+1)!)^{1/2}\geq K!$ in the denominator, which we will see later; this gain will enable us to prove that \emph{for all} $\varepsilon>0$, we have $c_t>1/2-\varepsilon$ for most $t$ (the exceptional set depending on $\varepsilon$).


\subsection{Upper bounds for the moments of $\mu_t$}

We wish to study repeated application of the recurrence~\eqref{eqn_moments_recurrence}, corresponding to appending a block of $\tO$s or $\tL$s to the binary expansion of $t$.


Using an elementary proof by induction, we obtain
\[ A_0^m=\bigl(C(x)-S(x)\bigr)^{m-1}A_0=e^{-(m-1)x}A_0 \]
and
\[ B_0^m=2^{m-1}B_0 \]
for $m\geq 1$.
Moreover, $B_0A_0=\left(\begin{smallmatrix}0&0\\0&0\end{smallmatrix}\right)$
and $A_0B_0=\bigl(C(x)+S(x)\bigr)D_0=e^{x}D_0,$
where 
$D_0=\left(\begin{smallmatrix}1&1\\-1&-1\end{smallmatrix}\right)$.
Appending a block of $\tO$s of length $m$ corresponds to the matrix power $M_0^m$.

Noting that $A_0$ and $B_0$ do not commute, we consider all ordered products of $A_0$ and $B_0$ of length $m_0$;
since $B_0A_0$ vanishes, we are only interested in the products $A_0^{m_0-\ell} B_0^\ell$.
This yields
\begin{align*}
2^{m_0}M_0^{m_0}&=
\sum_{0\leq \ell\leq m_0}
A_0^{m_0-\ell}B_0^\ell 
=
e^{-(m_0-1)x}A_0+2^{m_0-1}B_0+
\sum_{1\leq \ell\leq m_0-1}
A_0^{m_1-\ell}B_0^\ell.
\end{align*}
We have
\begin{align*}
\sum_{1\leq \ell\leq m_1-1}
A_0^{m_0-\ell}B_0^\ell
&=
\sum_{1\leq \ell\leq m_0-1}
\e^{-(m_0-\ell-1)x}A_0
2^{\ell-1}B_0
\\&=
e^{(3-m_0)x}D_0
\sum_{0\leq \ell\leq m_1-2}
\left(2e^x\right)^\ell
=
e^x\frac{2^{m_0-1}-e^{(1-m_0)x}}{2-e^{-x}}D_0,
\end{align*}
which is also valid for $m=1$,
and therefore
\begin{align}\label{eqn_M0_power}
2M_0^{m_0}=
\left(\frac{e^{-x}}2\right)^{m_0-1}A_0
+B_0
+\frac {e^{x}}{2-e^{-x}}
\biggl(1-
\left(\frac{e^{-x}}2\right)^{m_0-1}
\biggr)D_0.
\end{align}
Also, we study appending a block of $\tL$s.
By induction, we obtain
\[ A_1^m=\bigl(C(x)+S(x)\bigr)^{m-1}A_1=e^{(m-1)x}A_1 \]
and
\[ B_1^m=2^{m-1}B_1 \]
for $m\geq 1$.
Moreover, $B_1A_1=\left(\begin{smallmatrix}0&0\\0&0\end{smallmatrix}\right)$
and $A_1B_1=\bigl(C(x)-S(x)\bigr)D_1=e^{-x}D_1,$
where 
$D_1=\left(\begin{smallmatrix}1&-1\\1&-1\end{smallmatrix}\right)$.
Appending a block of $\tL$s of length $m$ corresponds to the matrix power $M_1^m$.
Again, the matrices $A_1$ and $B_1$ do not commute, but $B_1A_1$ vanishes. This yields
\begin{align*}
2^{m_1}M_1^{m_1}&=
\sum_{0\leq \ell\leq m_1}
A_1^{m_1-\ell}B_1^\ell
=
e^{(m_1-1)x}A_1+2^{m_1-1}B_1+
\sum_{1\leq \ell\leq m_1-1}
A_1^{m_1-\ell}B_1^\ell.
\end{align*}
We have
\begin{align*}
\sum_{1\leq \ell\leq m_1-1}
A_1^{m-\ell}B_1^\ell 
&=
\sum_{1\leq \ell\leq m_1-1}
e^{(m_1-\ell-1)x}A_1
2^{\ell-1}B_1
\\&=
e^{(m_1-3)x}D_1
\sum_{0\leq \ell\leq m_1-2}
\left(\frac 2{e^x}\right)^\ell
=
\frac 1{e^x}\frac{2^{m_1-1}-e^{(m_1-1)x}}{2-e^x}D_1,
\end{align*}
which is also valid for $m=1$,
and therefore
\begin{align}\label{eqn_M1_power}
2M_1^{m_1}=
\left(\frac{e^x}2\right)^{m_1-1}A_1
+B_1
+\frac {e^{-x}}{2-e^x}
\biggl(1-
\left(\frac{e^x}2\right)^{m_1-1}
\biggr)D_1.
\end{align}

We are interested in the entries of these matrix powers;
they are generating functions in the variable $x$, convergent in the whole of $\mathbb C$, and we consider their coefficients.
First, we prove a statement on the low powers of $x$.

\begin{lemma}\label{lem_lower_coeffs}
Assume that $m\geq 1$.
Let
\[M_0^m=\left(\begin{matrix}\mathfrak a_0(x)&\mathfrak b_0(x)\\\mathfrak c_0(x)&\mathfrak d_0(x)\end{matrix}\right).\]
Then as $x\rightarrow 0$,
\begin{equation}\label{eqn_M0_coeffs}
\begin{aligned}
\mathfrak a_0(x)&=1+\frac{2^m-1}{2^{m+1}}x^2+\LandauO(x^3);&
\mathfrak b_0(x)&=\frac {2^m-1}{2^m}+\LandauO(x);\\
\mathfrak c_0(x)&=-\frac{2^m-1}{2^{m+1}}x^2+\LandauO(x^3);&
\mathfrak d_0(x)&=\frac 1{2^m}+\LandauO(x).
\end{aligned}
\end{equation}

Let
\[M_1^m=\left(\begin{matrix}\mathfrak a_1(x)&\mathfrak b_1(x)\\\mathfrak c_1(x)&\mathfrak d_1(x)\end{matrix}\right).\]
Then
\begin{equation}\label{eqn_M1_coeffs}
\begin{aligned}
\mathfrak a_1(x)&=1+\frac{2^m-1}{2^{m+1}}x^2+\LandauO(x^3);&
\mathfrak b_1(x)&=-\frac {2^m-1}{2^m}+\LandauO(x);\\
\mathfrak c_1(x)&=\frac{2^m-1}{2^{m+1}}x^2+\LandauO(x^3);&
\mathfrak d_1(x)&=\frac 1{2^m}+\LandauO(x).
\end{aligned}
\end{equation}
\end{lemma}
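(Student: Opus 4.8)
Looking at this lemma, the plan is to extract the low-order Taylor coefficients from the closed-form expressions \eqref{eqn_M0_power} and \eqref{eqn_M1_power} for the matrix powers. The key observation is that everything is already written explicitly in terms of the entire functions $C(x)=\cosh x$, $S(x)=\sinh x$, $e^{\pm x}$, and the simple rational-type factor $1/(2-e^{\mp x})$, so there is nothing to prove by induction here; the inductions producing $A_i^m=e^{\pm(m-1)x}A_i$, $B_i^m=2^{m-1}B_i$, and the vanishing of $B_iA_i$ have all been carried out already above.

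First I would rewrite $2M_0^{m_0}$ from \eqref{eqn_M0_power} entrywise. Each of $A_0$, $B_0$, $D_0$ is a constant matrix, so the four entries of $2M_0^{m_0}$ are scalar combinations of the three functions $(e^{-x}/2)^{m-1}$, $1$, and $\tfrac{e^x}{2-e^{-x}}\bigl(1-(e^{-x}/2)^{m-1}\bigr)$ with the fixed $2\times 2$ coefficient patterns coming from $A_0=\left(\begin{smallmatrix}C&S\\-C&-S\end{smallmatrix}\right)$, $B_0=\left(\begin{smallmatrix}1&1\\1&1\end{smallmatrix}\right)$, $D_0=\left(\begin{smallmatrix}1&1\\-1&-1\end{smallmatrix}\right)$. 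Then I expand to the needed order: $e^{-x}=1-x+x^2/2+\LandauO(x^3)$, so $(e^{-x}/2)^{m-1}=2^{1-m}\bigl(1-(m-1)x+\LandauO(x^2)\bigr)$; $C(x)=1+\LandauO(x^2)$, $S(x)=x+\LandauO(x^3)$; and $\tfrac{e^x}{2-e^{-x}}=1+x+x^2+\LandauO(x^3)$ from \eqref{eqn_M1} (this is exactly $M_1(x)$). Plugging in and collecting powers of $x$ for each of the four entries gives the claimed expansions; in particular the constant terms of $\mathfrak a_0,\mathfrak d_0$ come out to $\tfrac12\bigl(1+2^{1-m}+1-2^{1-m}\bigr)\cdot\tfrac12=1$ and... wait, one must divide by $2$ since the formula is for $2M_0^{m_0}$, and after dividing, the constant term of $\mathfrak d_0$ is $\tfrac12\cdot 2^{1-m}=2^{-m}$, that of $\mathfrak b_0$ is $\tfrac12(2^{1-m}+2-2^{1-m})\cdot$—again after the $1/2$—equals $(2^m-1)/2^m$, matching the stated values, and the $x^2$-coefficients of $\mathfrak a_0,\mathfrak c_0$ require tracking the $x^2$ term of $\tfrac{e^x}{2-e^{-x}}$ together with the $x^2$ term of $(e^{-x}/2)^{m-1}$. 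The computation for $2M_1^{m_1}$ from \eqref{eqn_M1_power} is entirely parallel with $e^x\leftrightarrow e^{-x}$, $\tfrac{e^x}{2-e^{-x}}$ replaced by $\tfrac{e^{-x}}{2-e^x}$ (whose expansion is $1-x+x^2+\LandauO(x^3)$), and the sign pattern $B_1=\left(\begin{smallmatrix}1&-1\\-1&1\end{smallmatrix}\right)$, $D_1=\left(\begin{smallmatrix}1&-1\\1&-1\end{smallmatrix}\right)$ accounting for the sign flips in $\mathfrak b_1,\mathfrak c_1$.

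The main thing to be careful about — the only place an error could creep in — is bookkeeping the $x^2$-coefficients of $\mathfrak a_i$ and $\mathfrak c_i$, since there the contributions from the $A_i$-term (which carries $C(x)$, even in $x$, so contributes at order $x^2$), from the $(e^{-x}/2)^{m-1}$ factor inside the $D_i$-term, and from the quadratic term of $\tfrac{e^{\pm x}}{2-e^{\mp x}}$ all interact, and one must check that the linear-in-$x$ contributions to $\mathfrak a_i$, $\mathfrak d_i$ genuinely cancel so that the expansion really is $1+\LandauO(x^2)$ type rather than having an $x^1$ term. This cancellation is not accidental: it reflects $m_1(t)=0$ (the distributions $\mu_t$ have mean zero), which is consistent with the fact, noted after the lemma on moment recurrences, that these generating-function identities encode the moment recurrence \eqref{eqn_moments_recurrence}. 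So I would verify it directly from the explicit formulas and remark on this consistency. Since $m\geq 1$ is assumed and all the functions involved are entire, no convergence issue arises and the $\LandauO$-terms are uniform once $m$ is fixed — and in fact the dependence on $m$ is entirely explicit, which is why the stated coefficients are exact rational functions of $2^m$.
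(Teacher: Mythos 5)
Your approach is exactly the paper's (which, in fact, simply declares the lemma ``easy'' and leaves the expansion of \eqref{eqn_M0_power} and \eqref{eqn_M1_power} to the reader), but you have two arithmetic slips that would derail the calculation if carried out.

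First, you claim $\frac{e^x}{2-e^{-x}}=1+x+x^2+\LandauO(x^3)$, ``this is exactly $M_1(x)$.'' But \eqref{eqn_M1} itself says $M_1(x)=1+x^2-x^3+\cdots$, with no linear term (this encodes $m_1=0$). Similarly $\frac{e^{-x}}{2-e^{x}}=M_1(-x)=1+x^2+x^3+\cdots$, not $1-x+x^2+\cdots$. With your spurious $\pm x$ term the first-order coefficient of $\mathfrak a_0$ would come out as $\frac12\bigl(1-2^{1-m}\bigr)\neq 0$ for $m\geq 2$, contradicting the claimed $1+\frac{2^m-1}{2^{m+1}}x^2+\LandauO(x^3)$. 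Once corrected, the linear contributions to $2\mathfrak a_0=\bigl(\tfrac{e^{-x}}2\bigr)^{m-1}C(x)+1+\tfrac{e^{x}}{2-e^{-x}}\bigl(1-(\tfrac{e^{-x}}2)^{m-1}\bigr)$ are $-2^{1-m}(m-1)$ from the cosh piece and $+2^{1-m}(m-1)$ from the last piece, which do cancel, and the $x^2$ coefficient works out to $\frac{2^m-1}{2^{m+1}}$.

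Second, you say one must check that the linear terms of $\mathfrak a_i$ and $\mathfrak d_i$ cancel. The statement only claims $\mathfrak d_i(x)=2^{-m}+\LandauO(x)$, so no cancellation is needed there (indeed $\mathfrak d_0'(0)=-m\,2^{-m}\neq 0$); the entries whose linear terms genuinely vanish, and for which the $x^2$ coefficient is the content of the lemma, are $\mathfrak a_i$ and $\mathfrak c_i$. With those corrections the computation goes through exactly as you outlined.
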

\begin{proof}
The proof of this statement is easy, using~\eqref{eqn_M0_power} and~\eqref{eqn_M1_power}, and the expansions of $(e^{\pm x})^m$, $e^{\pm x}/(2-e^{\mp x})$, $C(x)$, and $S(x)$.
We leave the details of this straightforward calculation to the reader.
\end{proof}

We also prove bounds for the error terms occurring in Lemma~\ref{lem_lower_coeffs}. 
That is, we need upper bounds for the coefficients
$\left[x^s\right]\mathfrak a_i(x)$ and 
$\left[x^s\right]\mathfrak c_i(x)$ for $s\geq 3$,
and for the coefficients
$\left[x^s\right]\mathfrak b_i(x)$ and 
$\left[x^s\right]\mathfrak d_i(x)$ for $s\geq 1$.

\begin{lemma}\label{lem_higher_coeffs}
Let $\mathfrak f\in\{\mathfrak a_0,\mathfrak b_0,\mathfrak c_0,\mathfrak d_0,\mathfrak a_1,\mathfrak b_1,\mathfrak c_1,\mathfrak d_1\}$.
Then for $k\geq 1$,
\[\left[x^k\right]\mathfrak f(x)\leq \frac 2{(\log 2)^k}.\]

\end{lemma}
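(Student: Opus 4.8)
The plan is to obtain the bound from the explicit formulas~\eqref{eqn_M0_power} and~\eqref{eqn_M1_power} for $2M_0^{m}$ and $2M_1^{m}$, reading off each matrix entry as a single explicit generating function in $x$ and bounding its $k$-th Taylor coefficient. Every entry is (up to a factor and a sign) built from the four building blocks $C(x)$, $S(x)$, $(e^{\pm x}/2)^{m-1}$, and $e^{\mp x}/(2-e^{\pm x})$; since the matrices $A_i,B_i,D_i$ all have entries in $\{0,\pm 1\}$, it suffices to bound $[x^k]$ of each of these four scalar functions (and of the products of the exponential prefactor with $e^{\mp x}/(2-e^{\pm x})$ that occur in the $D_i$-terms). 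The factor $2$ on the left cancels the overall $\tfrac12$, so each $\mathfrak f$ is a $\mathbb Z$-linear combination, with at most a couple of terms, of these blocks; a crude triangle inequality on the coefficients will then give a bound of the right shape, and I will track constants so that the final bound is exactly $2/(\log 2)^k$.

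First I would record the elementary coefficient bounds. For $k\geq 1$ one has $\bigl|[x^k]C(x)\bigr|,\bigl|[x^k]S(x)\bigr|\leq 1/k!\leq 1/(\log 2)^k$ trivially (in fact far smaller). For $k\geq 1$, $[x^k](e^{x}/2)^{m-1}=(m-1)^k/(2^{m-1}k!)$, which is also $\LandauO$ of a constant times $1/(\log 2)^k$, but crucially with a constant that does \emph{not} blow up with $m$: indeed $(m-1)^k/(2^{m-1}k!)\le \sup_{m\ge 1}(m-1)^k 2^{-(m-1)}/k!$, and $(m-1)^k2^{-(m-1)}$ is maximized over real arguments near $m-1\approx k/\log 2$, giving a bound comparable to $(k/\log 2)^k e^{-k}/k!\sim (2\pi k)^{-1/2}(\log 2)^{-k}$ by Stirling, hence $\le (\log 2)^{-k}$ for all $k\ge1$. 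The only genuinely interesting block is $h_\pm(x)=e^{\mp x}/(2-e^{\pm x})$: it is meromorphic with its pole nearest the origin at $x=\pm\log 2$, and it is analytic on $|x|<\log 2$. A partial-fraction / geometric-series expansion gives $1/(2-e^{x})=\tfrac12\sum_{j\ge0}(e^{x}/2)^j=\tfrac12\sum_{j\ge0}2^{-j}\sum_{n\ge0}(jx)^n/n!$, so $[x^k]\,h_+(x)=\tfrac12\sum_{j\ge0}2^{-j}(j-1)^k/k!$ (with the $e^{-x}$ factor shifting $j\mapsto j-1$); this is a positive series and I would bound it by $\tfrac12 k!^{-1}\sum_{j\ge0}2^{-j}(j+1)^k$. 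One checks $\sum_{j\ge0}(j+1)^k 2^{-j}\le C\cdot k!\,(\log 2)^{-k}$ for an absolute constant; the sharpest route is to note $\sum_{j\ge1}j^k x^j=\mathrm{Li}_{-k}(x)$ and, at $x=1/2$, $\mathrm{Li}_{-k}(1/2)\sim k!/(\log 2)^{k+1}$ as $k\to\infty$, with the one-sided bound $\mathrm{Li}_{-k}(1/2)\le k!/(\log 2)^{k+1}\cdot\bigl(1+o(1)\bigr)$ — and a clean closed inequality $\mathrm{Li}_{-k}(1/2)\le k!/(\log2)^{k+1}$ holding for $k\ge1$ can be proved by induction using $x\frac{d}{dx}\mathrm{Li}_{-k}=\mathrm{Li}_{-k-1}$ together with the comparison of the radius of convergence. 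Combining, $[x^k]h_\pm(x)\le \tfrac12(\log 2)^{-k-1}\cdot\bigl(1/\log 2\bigr)\cdot(\text{const})$, which after checking the constant is $\le (\log 2)^{-k}$, comfortably inside the claimed $2/(\log2)^k$.

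Then I would assemble. For the $D_i$-term in~\eqref{eqn_M0_power} one has the single function $\tfrac{e^{x}}{2-e^{-x}}\bigl(1-(e^{-x}/2)^{m-1}\bigr)$: expand the product of the two power series and bound coefficient by coefficient using $[x^k]\bigl(fg\bigr)=\sum_{i}[x^i]f\,[x^{k-i}]g$, where $f=e^{x}/(2-e^{-x})$ has the $h$-type bound above and $g=1-(e^{-x}/2)^{m-1}$ has coefficients bounded by $(m-1)^k/(2^{m-1}k!)$ for $k\ge1$ and $1-2^{-(m-1)}\le 1$ for $k=0$; the convolution of two sequences each bounded by $\text{const}\cdot(\log2)^{-k}$ is again $\text{const}\cdot(\log 2)^{-k}$ times the number of terms weighted appropriately — here I would instead bound $f\cdot g$ as a whole by noting $|g(x)|\le $ something and that $f$ already is the worst actor, so $[x^k](fg)\le [x^k]$ of $f$ against a probability-like weight, keeping the constant controlled. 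Doing this for all eight entries $\mathfrak a_i,\mathfrak b_i,\mathfrak c_i,\mathfrak d_i$ and taking the maximum of the (at most three) contributing building-block bounds in each, the uniform bound $[x^k]\mathfrak f(x)\le 2/(\log 2)^k$ for all $k\ge1$, all $m\ge1$, and all eight choices of $\mathfrak f$ follows. The main obstacle is purely book-keeping: getting the absolute constant down to exactly $2$ (rather than some larger explicit number) requires the slightly careful estimate $\mathrm{Li}_{-k}(1/2)\le k!/(\log 2)^{k+1}$ and the observation that the exponential prefactors $(e^{\pm x}/2)^{m-1}$ only ever shrink the relevant coefficients or, when they appear additively, contribute a geometrically small (in $m$) correction — so the $m$-dependence genuinely drops out and the bound is uniform.
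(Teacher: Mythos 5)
Your overall strategy (read off each entry of $2M_i^m$ from~\eqref{eqn_M0_power}--\eqref{eqn_M1_power}, bound the coefficients of each of the four scalar building blocks $C$, $S$, $(e^{\pm x}/2)^{m-1}$, and $e^{\mp x}/(2-e^{\pm x})$, then assemble by triangle inequality) is exactly the paper's. The Stirling argument for $(e^x/2)^{m-1}$ is also the paper's. But the central estimate you rely on is false. You claim the ``clean closed inequality'' $\mathrm{Li}_{-k}(1/2)\le k!/(\log 2)^{k+1}$ for all $k\ge 1$, equivalently that the Fubini number $F_k$ satisfies $F_k\le k!/(2(\log 2)^{k+1})$. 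That is wrong for every $k\ge 3$: e.g.\ $\mathrm{Li}_{-3}(1/2)=26$ while $3!/(\log 2)^4\approx 25.99$, and $\mathrm{Li}_{-4}(1/2)=150$ while $4!/(\log 2)^5\approx 149.96$. The Fubini numbers do \emph{not} lie below the leading-pole asymptotic: the neglected poles of $1/(2-e^x)$ at $\log 2\pm 2\pi i n$ contribute with oscillating sign and for odd $k\ge 3$ push the true value just \emph{above} $k!/(2(\log 2)^{k+1})$. So your proposed induction ``using $x\frac{d}{dx}\mathrm{Li}_{-k}=\mathrm{Li}_{-k-1}$ and the radius of convergence'' cannot succeed, and every bound downstream of it is in jeopardy. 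The paper copes with precisely this issue by summing over \emph{all} poles (the Bailey-type identity $[x^k]\,1/(2-e^x)=\tfrac12\sum_{n\in\mathbb Z}(\log 2+2\pi i n)^{-k-1}$), and bounding the $n\neq 0$ tail by $2/(2\pi)^{k+1}$, which gives the correct inequality $[x^k]\,1/(2-e^x)\le (\log 2)^{-k}$ with room to spare. You need either that, or a correct one-sided bound with a slack constant; the one you wrote down has no slack and is simply untrue.

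A secondary gap: your treatment of the $D_i$-terms $e^{\mp x}/(2-e^{\pm x})\bigl(1-(e^{\pm x}/2)^{m-1}\bigr)$ is waved through (``bound $f\cdot g$ as a whole by noting $|g(x)|\le$ something \dots''). To make this rigorous the paper first proves, again from the all-poles identity, that $k\mapsto[x^k]\,1/(2-e^x)$ is nondecreasing, deduces from this that $[x^k]\,e^{-x}/(2-e^x)\ge 0$ and is $\le [x^k]\,1/(2-e^x)$, and only then controls the convolution with $1-(e^x/2)^{m-1}$ uniformly in $m$. Without some such nonnegativity/monotonicity input your ``probability-like weight'' argument is not a proof. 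In short: same plan, but the two load-bearing estimates in your sketch are respectively false and unsupported, and both are fixed in the paper by going through the exact pole expansion rather than the truncated geometric-series heuristic.
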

\begin{proof}
We first note that
\[\left[x^k\right]\left(e^x/2\right)^m
=\frac{m^k}{2^mk!}.\]
This function in $m$ attains its maximum at $m=k/\log 2$, yielding
\[\left[x^k\right]\left(e^x/2\right)^m\leq \left(\frac ke\right)^k\frac 1{k!}\frac 1{(\log 2)^k}\leq \frac 1{\sqrt{2\pi k}}\frac 1{(\log 2)^k},\]
using Robbins~\cite{R1955}.
Also, the third summands in~\eqref{eqn_M0_power} and~\eqref{eqn_M1_power} can be estimated by resorting to an asymptotic formula for the Fubini numbers (the coefficients of the exponential generating function for $1/(2-e^x)$).
Such an estimate follows easily from basic analytic combinatorics~\cite{FS2009}:
as $k\rightarrow\infty$, we have
\[\left[x^k\right] \frac 1{2-\e^x}=\frac{1}{2(\log 2)^{k+1}}(1+o(1)).\]
Taking all singularities of $1/(2-e^x)$ into account, we obtain the exact formula~\cite{B1998}
\begin{equation}\label{eqn_Bailey}
\left[x^k\right] \frac 1{2-e^x}
=
\frac 12\sum_{n=-\infty}^\infty
\bigl(\log(2)+2\pi i n\bigr)^{-k-1},
\end{equation}
valid for $k\geq 1$.
The contribution of the terms $\lvert k\rvert \geq 1$ can be estimated using an integral: for $n\geq 1$ we have
\[\sum_{k\geq 1}k^{-n-1}\leq 1+\int_1^\infty x^{-n-1}\ud x=1+\frac 1n\leq 2.\]
Therefore 
\begin{equation}\label{eqn_fubini_explicit}
\left[x^k\right] \frac 1{2-\e^x}\leq
\frac 1{2(\log 2)^{k+1}}
+\frac 2{(2\pi)^{k+1}}
\leq \frac 1{(\log2)^k}
\end{equation}
for $k\geq 1$, the outer estimate also being valid for $k=0$.

For the estimation of coefficients of $M_i^m$,
we are also interested in partial sums.
We show the statements for $M_1$.
The case $M_0$ can be obtained replacing $\pm x$ by $\mp x$,
noting that for a generating function $H(x)$, the generating functions $H(x)$ and $H(-x)$ differ only by the sign of their respective coefficients.
We first show that $k\mapsto \left[x^k\right]1/(2-e^x)$ is nondecreasing, using~\eqref{eqn_Bailey}.
Passing from $k$ to $k+1$, the summand in~\eqref{eqn_Bailey} corresponding to $n=0$ increases by a quantity bounded below by $0.6$.
The other terms, in total, change by less, as we show now.
For an integer $n\geq 1$, we have
\begin{align*}
\left\lvert
\bigl(\log 2+2\pi i n\bigr)^{-k-2}-
\bigl(\log 2+2\pi i n\bigr)^{-k-1}
\right\rvert
&\leq 
\bigl\lvert\bigl(\log 2+2\pi i n\bigr)^{-k-1}\bigr\rvert
\bigl\lvert
\bigl(\log 2+2\pi i n\bigr)^{-1}-1
\bigr\rvert
\\&\leq
2\cdot (6n)^{-k-1},
\end{align*}
and since $\zeta(2)=\pi^2/6$, it is clear that the total contribution of $n\neq 0$ is bounded above by $0.6$.
Also, we have $[x^1]1/(2-e^x)=[x^0]1/(2-e^x)=1$,
and therefore monotonicity for $k\geq 0$ follows.

Next, this monotonicity implies that $\left[x^k\right]e^{-x}/(2-e^x)$ is nonnegative and bounded by $\left[x^k\right]1/(2-e^x)$:
expanding the Cauchy product $e^{-x}\cdot (2-e^x)^{-1}$, we see that the coefficients are given by an alternating sum of nonincreasing values, which immediately implies the claim.

Using this nonnegativity property, and also the fact that $e^x$ has nonnegative coefficients, we obtain
\begin{align*}
\left\lvert
\left[x^k\right]
\frac {e^{-x}}{2-e^x}
\biggl(1-
\left(\frac{e^x}2\right)^{m_0-1}
\biggr)
\right\rvert
&\leq
\left[x^k\right]
\frac {e^{-x}}{2-e^x}
\leq 
\left[x^k\right]
\frac1{2-e^x}
\leq \frac1{(\log 2)^k}.
\end{align*}
%
Moreover, since $C$ has nonnegative coefficients bounded by the coefficients of $e^x$, we have
\begin{align*}
\left\lvert
\left[x^k\right]\left(e^x/2\right)^{m_0-1}C(x)
\right\rvert
\leq 
2\left[x^k\right]\left(e^x/2\right)^{m_0}
\leq \frac {2}{\sqrt{2\pi k}}\frac 1{(\log 2)^k}.
\end{align*}
The same is true for $S$ in place of $C$.
It follows that the coefficients of the entries of $M_0^m$ and $M_1^m$ are bounded by 
\[\frac 12+\frac1{\sqrt{2\pi k}(\log 2)^k}+\frac1{2(\log 2)^k}\leq \frac 2{(\log 2)^k}\] for $k\geq 1$. This finishes the proof of Lemma~\ref{lem_higher_coeffs}.
\end{proof}
In particular, using~\eqref{eqn_M1}, it follows from this proof that
\begin{equation}\label{eqn_start}
a_k(0)\leq (\log 2)^{-k}\quad\mbox{and}\quad\lvert b_k(0)\rvert\leq (\log 2)^{-k}\quad\mbox{for }k\geq 1.
\end{equation}

We are now prepared to prove upper bounds for the moments $m_k(t)$.
\begin{proposition}\label{prp_moment_bounds}
Set $A_k=2\cdot(3/2)^{k-1}/k!$ for $k\geq 1$.
There exist constants $B_k$, $C_k$, and $E_k$ (for $k\geq 1$) and $D_k$ (for $k\geq 2$) such that for all $r\geq 1$, and all $t\geq 1$ having $r$ blocks we have
\[
\begin{array}{r@{\hskip 1mm}c@{\hskip 1mm}lcr@{\hskip 1mm}c@{\hskip 1mm}lcr@{\hskip 1mm}c@{\hskip 1mm}l}
\multicolumn{3}{c}{k=0}&\hphantom{\hskip 3em}&
\multicolumn{3}{c}{k=1}&\hphantom{\hskip 3em}&
\multicolumn{3}{c}{k\geq 2}\\
\hline\vspace{-2mm}\\
\left\lvert a_0(t)\right\rvert&=&2;&&
\left\lvert a_2(t)\right\rvert&\leq&A_1r+B_1;&&
\left\lvert a_{2k}(t)\right\rvert&\leq& A_kr^k+B_kr^{k-1};\\[2mm]
\left\lvert a_1(t)\right\rvert&=&0;&&
\left\lvert a_3(t)\right\rvert&\leq&C_1r;&&
\left\lvert a_{2k+1}(t)\right\rvert&\leq&C_kr^k;\\[2mm]
\left\lvert b_0(t)\right\rvert&=&0;&&
\left\lvert b_2(t)\right\rvert&\leq&1;&&
\left\lvert b_{2k}(t)\right\rvert&\leq&A_{k-1}r^{k-1}+D_kr^{k-2};\\[2mm]
\left\lvert b_1(t)\right\rvert&=&0;&&
\left\lvert b_3(t)\right\rvert&\leq& E_1;&&
\left\lvert b_{2k+1}(t)\right\rvert&\leq& E_kr^{k-1}.
\end{array}
\]
\end{proposition}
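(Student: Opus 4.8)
The plan is to prove Proposition~\ref{prp_moment_bounds} by induction on the number of blocks $r$, using the matrix recurrences~\eqref{eqn_momentcoeff_rec} together with the explicit powers~\eqref{eqn_M0_power} and~\eqref{eqn_M1_power}. Recall that appending a block of $\tO$s of length $m_i$ multiplies the vector $(F_t,G_t)^{\mathsf T}$ by $M_0^{m_i}$, and appending a block of $\tL$s of length $m_i$ multiplies it by $M_1^{m_i}$; since $t$ has $r$ blocks, $(F_t,G_t)^{\mathsf T}$ is obtained from $(F_0,G_0)^{\mathsf T}$ — whose coefficients are bounded in~\eqref{eqn_start} — by applying $r$ such matrix powers in succession. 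Writing $M^{m}$ for a generic one of $M_0^m$, $M_1^m$ with entries $\mathfrak a,\mathfrak b,\mathfrak c,\mathfrak d$, one reads off from Lemma~\ref{lem_lower_coeffs} that the diagonal entries are $1+\LandauO(x^2)$ and $\LandauO(x^0)$ respectively (in fact $\mathfrak d$ has a small constant term $2^{-m}$, and $\mathfrak b$ a constant term close to $\pm 1$), while the off-diagonal entries $\mathfrak c$ start at order $x^2$; by Lemma~\ref{lem_higher_coeffs}, all higher coefficients of all four entries are bounded by $2(\log 2)^{-k}$. The key structural point is that in the product $(F_{t'},G_{t'})^{\mathsf T} = M^m (F_t,G_t)^{\mathsf T}$, the coefficient $a_k(t')$ (the $F$-component) picks up $a_k(t)$ essentially unchanged from the $\mathfrak a\approx 1$ entry, plus a contribution $\mathfrak b \cdot G_t$ in which \emph{lower-order} coefficients $b_j(t)$, $j<k$, are combined with $O(1)$ coefficients of $\mathfrak b$, plus corrections of order $x^2$ or higher involving $a_j(t),b_j(t)$ with $j\le k-2$.

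**The second step** is to set up the induction so that the polynomial growth in $r$ is tracked correctly. The heuristic is: $a_{2k}$ grows like $r^k$ because each new block can add a term of size $\sim A_1 = 3$ to the ``second moment level'' and these accumulate multiplicatively across the $k$ paired indices; $a_{2k+1}$ and $b_{2k}$ are one power of $r$ smaller, and $b_{2k+1}$ smaller still. Concretely, fix $k$ and assume the four bounds of the Proposition hold for all $t$ with $r$ blocks and for all smaller even/odd indices with the appropriate powers of $r$. When we append a block, turning $r$ into $r+1$, the recurrence gives
\[
a_{2k}(t') = a_{2k}(t) + (\text{terms from }\mathfrak b, G_t)+(\text{order-}{\ge}2\text{ corrections}),
\]
where the $\mathfrak b$-term is a convolution $\sum_{j} [x^{2k-j}]\mathfrak b \cdot b_j(t)$ with $j<2k$, hence by the (smaller-index, lower-power) induction hypothesis it is bounded by $\LandauO(r^{k-1})$, and the order-${\ge}2$ corrections involve $a_j(t),b_j(t)$ with $j\le 2k-2$, again $\LandauO(r^{k-1})$. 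So $a_{2k}(t') \le a_{2k}(t) + (\text{const})\cdot r^{k-1}$, and summing this telescoping inequality over the $r$ blocks, using $\sum_{\rho\le r}\rho^{k-1} = \LandauO(r^k)$, yields $a_{2k}(t) \le A_k r^k + B_k r^{k-1}$ for suitable $A_k, B_k$. Crucially one must verify that the leading constant $A_k$ satisfies the claimed recursion $A_k = \tfrac{3}{2}A_{k-1}/k$ (equivalently $A_k = 2(3/2)^{k-1}/k!$); this comes from tracking precisely the coefficient of the dominant term, which is fed by the $\mathfrak b$-entry's low coefficients acting on $b_{2k-2}$, of size $\le A_{k-1}r^{k-1}$, multiplied by the $\LandauO(1)$ coefficient — the factor $3/2$ and the $1/k$ must be shown to match by an exact bookkeeping of~\eqref{eqn_M0_power}, \eqref{eqn_M1_power}. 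The analogous arguments, with bookkeeping shifted, give the bounds for $a_{2k+1}, b_{2k}, b_{2k+1}$; the $b$-bounds are easier because the diagonal entry $\mathfrak d$ entering $G_{t'}$ has small constant term $2^{-m}\le 1/2$, so the $b$-coefficients do not propagate ``for free'' and lose an extra power of $r$.

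**The main obstacle** I anticipate is twofold. First, there is a genuine interdependence: $a_{2k}(t')$ depends on $b_j(t)$ and $b_{2k}(t')$ depends on $a_j(t)$, so the induction cannot be run on one family alone — it must be a simultaneous induction on the index $k$ (or on $2k{+}1$) in which all four bounds are carried together, and the hierarchy of powers ($r^k$ for $a_{2k}$, $r^{k-1}$ for $a_{2k+1}, b_{2k}$, $r^{k-1}$ for $b_{2k+1}$) has to be exactly right so that every cross-term lands at an order strictly below the target, making the telescoping sum close. Setting up the right induction statement — the precise list of which power of $r$ attaches to which coefficient — is the conceptual crux; once it is correct, each inductive step is a bounded convolution estimate using Lemma~\ref{lem_higher_coeffs}. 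Second, pinning down that $A_k = 2(3/2)^{k-1}/k!$ rather than merely $A_k = \LandauO(C^k/k!)$ requires isolating, among the many terms produced by expanding $M_0^m$ and $M_1^m$, exactly which one carries the leading $r^k$ behaviour and checking its constant is $3/2$ times the previous one divided by $k$; I expect this to follow from the $\tfrac{e^{-x}}{2-e^{-x}}$-type factor in~\eqref{eqn_M0_power} contributing a coefficient at $x^2$ equal to some explicit rational, combined with the $1/k!$ coming from the $k$-th convolution power, but verifying it demands care. Everything else — the base case $r=1$ (a single block of $\tL$s, handled directly from $M_1^{m}$ applied to $(F_0,G_0)^{\mathsf T}$ with~\eqref{eqn_start}), the small-index cases $k=0,1$ listed separately in the statement, and the passage from even to odd blocks — is routine once the framework is in place.
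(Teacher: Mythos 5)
Your high‑level strategy is the same as the paper's: run a simultaneous (nested) induction across the four families $a_{2k},a_{2k+1},b_{2k},b_{2k+1}$ with the power hierarchy $r^k,r^k,r^{k-1},r^{k-1}$; for fixed $k$, iterate over blocks via the matrix powers $M_0^m,M_1^m$ of~\eqref{eqn_M0_power},~\eqref{eqn_M1_power} and their coefficient bounds (Lemmas~\ref{lem_lower_coeffs},~\ref{lem_higher_coeffs}); and telescope the per‑block increment using $\sum_{n<N}n^{k-1}\le N^k/k$ to produce the extra power of $r$ and the $1/k$ factor. That architecture is correct, and your intuition for why $b$ loses a power of $r$ relative to $a$ (the diagonal entry $\mathfrak d$ has constant term $2^{-m}\le 1/2$, so there is no free propagation in the $G$‑component) also matches the paper.

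However, there is a genuine gap in the one place where you commit to a concrete mechanism, and it would prevent your bookkeeping from producing the stated constant $A_k=2\cdot(3/2)^{k-1}/k!$. You claim the dominant increment to $a_{2k}$ is ``fed by the $\mathfrak b$‑entry's low coefficients acting on $b_{2k-2}$, of size $\le A_{k-1}r^{k-1}$, \dots\ I expect this to follow from the $e^{-x}/(2-e^{-x})$‑type factor\ldots\ contributing a coefficient at $x^2$.'' This is wrong twice over. First, $b_{2k-2}(t)$ is bounded by $A_{k-2}r^{k-2}+D_{k-1}r^{k-3}$, one power of $r$ \emph{smaller} than $A_{k-1}r^{k-1}$; so $[x^2]\mathfrak b\cdot b_{2k-2}(t)=\LandauO(r^{k-2})$ is a genuinely subleading correction. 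Second, and more importantly, the actual leading increment in $a_{2k}(t')-a_{2k}(t)$ consists of \emph{two} pieces, read off from~\eqref{eqn_a2k_expanded}:
\[
\frac{2^m-1}{2^{m+1}}\,a_{2k-2}(t)\ \le\ \tfrac12\,A_{k-1}r^{k-1}\quad\text{(from }[x^2]\mathfrak a=\tfrac{2^m-1}{2^{m+1}}\text{, not from }\mathfrak b\text{),}
\]
and
\[
\frac{2^m-1}{2^m}\,\lvert b_{2k}(t)\rvert\ \le\ A_{k-1}r^{k-1}\quad\text{(from }[x^0]\mathfrak b=\pm\tfrac{2^m-1}{2^m}\text{, same index }2k\text{, not }2k-2\text{).}
\]
Together these give the per‑block bound $\tfrac32 A_{k-1}r^{k-1}$; summing in $r$ yields $A_k=\tfrac{3}{2k}A_{k-1}$ and hence $A_k=2(3/2)^{k-1}/k!$. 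In your narrative the constant‑term contribution $[x^0]\mathfrak b\cdot b_{2k}(t)$ is precisely the term you declared absent (``lower‑order coefficients $b_j(t)$, $j<k$''), yet it is one of the two dominant terms. Relatedly, the $1/k!$ does not come from ``the $k$‑th convolution power'' (there are $r$ matrix factors, not $k$): it accumulates one factor $1/j$ per induction level $j=2,\dots,k$ through the summation $\sum_{n<N}n^{j-1}\le N^j/j$. Finally, note that your ``corrections of order $x^2$ or higher involving $a_j,b_j$ with $j\le 2k-2$'' include the $\frac{2^m-1}{2^{m+1}}a_{2k-2}$ term that you need at leading order; treating it as a negligible correction would lose the proposition's constant. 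Fixing these misattributions is not cosmetic: without them the telescoping does not close with the right $A_k$, which is exactly the quantity used later (via $1/k!$ decay) to make the tail $K\to\infty$ of~\eqref{eqn_imagpart_moments} converge.
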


\begin{proof}
We proceed by induction on $k$,
using Lemmas~\ref{lem_lower_coeffs} and~\ref{lem_higher_coeffs}.
Clearly, the statement is true for $k=0$, since $m_0(t)=1$ and $m_1(t)=0$ for all $t\geq 0$.

We begin with the treatment of the even moments.
The first step is to verify the following identities for the case $k=1$:
\begin{equation}\label{eqn_core_estimates_even_1}
\begin{array}{l@{\hskip 2mm}c@{\hskip 2mm}c@{\hskip 1mm}c@{\hskip 1mm}c@{\hskip 1mm}c@{\hskip 1mm}c@{\hskip 1mm}c@{\hskip 1mm}c}
\displaystyle
a_{2}(2^mt)&=&a_{2}(t)&+&\displaystyle\frac{2^m-1}{2^m}&+&\displaystyle
\frac{2^m-1}{2^m}&b_{2}(t);\\[4mm]
b_{2}(2^mt)&=&&-&\displaystyle\frac{2^m-1}{2^m}&+&\displaystyle
\frac1{2^m}&b_{2}(t);\\[4mm]
a_{2}(2^mt+2^m-1)&=&a_{2}(t)&+&\displaystyle\frac{2^m-1}{2^m}&-&\displaystyle\frac{2^m-1}{2^m}&b_{2}(t);\\[4mm]
b_{2}(2^mt+2^m-1)&=&&&\displaystyle\frac{2^m-1}{2^m}&+&\displaystyle\frac{1}{2^m}&b_{2}(t).
\end{array}
\end{equation}
In the induction step, we will make use of the following identities, valid for $k\geq 2$: we have
\begin{equation}\label{eqn_core_estimates_even}
\begin{array}{l@{\hskip 2mm}c@{\hskip 2mm}c@{\hskip 1mm}c@{\hskip 1mm}c@{\hskip 1mm}c@{\hskip 1mm}c@{\hskip 1mm}c@{\hskip 1mm}c@{\hskip 1mm}c@{\hskip 1mm}c}
\displaystyle
a_{2k}(2^mt)&=&a_{2k}(t)&+&\displaystyle\frac{2^m-1}{2^{m+1}}&a_{2k-2}(t)&+&\displaystyle
\frac{2^m-1}{2^m}&b_{2k}(t)&+&\LandauO(r^{k-2});\\[4mm]
b_{2k}(2^mt)&=&&-&\displaystyle\frac{2^m-1}{2^{m+1}}&a_{2k-2}(t)&+&\displaystyle
\frac1{2^m}&b_{2k}(t)&+&
\LandauO(r^{k-2});\\[4mm]
a_{2k}(2^mt+2^m-1)&=&a_{2k}(t)&+&\displaystyle\frac{2^m-1}{2^{m+1}}&a_{2k-2}(t)&-&\displaystyle\frac{2^m-1}{2^m}&b_{2k}(t)&+&\LandauO(r^{k-2});\\[4mm]
b_{2k}(2^mt+2^m-1)&=&&&\displaystyle\frac{2^m-1}{2^{m+1}}&a_{2k-2}(t)&+&\displaystyle\frac{1}{2^m}&b_{2k}(t)&+&\LandauO(r^{k-2}),
\end{array}
\end{equation}
where the implied constants only depend on $k$.
It is notable that only even moments are involved in these identities!

\noindent
\textbf{Proof of~\eqref{eqn_core_estimates_even_1} and~\eqref{eqn_core_estimates_even}.}
We begin with the first and third lines of~\eqref{eqn_core_estimates_even_1} and~\eqref{eqn_core_estimates_even}.
Appending a block of $\tO$s or $\tL$s of length $m$ to $t$, we obtain $t'$;
by~\eqref{eqn_momentcoeff_rec} we have
\begin{equation*}
\begin{aligned}
\left(\begin{matrix}F_{t'}(x)\\G_{t'}(x)\end{matrix}\right)
&=M_i^m
\left(\begin{matrix}F_t(x)\\G_t(x)\end{matrix}\right),
\end{aligned}
\end{equation*}
where $i\in\{0,1\}$.
Using Lemma~\ref{lem_lower_coeffs} and expanding Cauchy products, we obtain 
\begin{equation}\label{eqn_a2k_expanded}
\begin{aligned}
a_{2k}(t')=a_{2k}(t)&+\frac{2^m-1}{2^{m+1}}a_{2k-2}+
\sum_{3\leq \ell\leq 2k}a_{2k-\ell}(t) \left[x^{\ell}\right]\mathfrak a_0 (x)
\\&\pm\frac{2^m-1}{2^m}b_{2k}(t)+\sum_{1\leq \ell\leq 2k}b_{2k-\ell}(t)\left[x^\ell\right]\mathfrak b_0(x).
\end{aligned}
\end{equation}
Here, the ``$+$''-part corresponds to appending a block of $\tO$s and the ``$-$''-part to appending a block of $\tL$s.
.
Clearly, for $k=1$ we have
\[
\sum_{3\leq \ell\leq 2k}a_{2k-\ell}(t) \left[x^{\ell}\right]\mathfrak a_0 (x)=
\sum_{1\leq \ell\leq 2k}b_{2k-\ell}(t)\left[x^\ell\right]\mathfrak b_0(x)=
0,\]
which implies the statements concerning $a_2$ in~\eqref{eqn_core_estimates_even_1}.

Inserting the estimates for the coefficients of $M_i^m$(Lemma~\ref{lem_higher_coeffs}) 
and the induction hypothesis, we obtain for $k\geq 2$, using $a_0(t)=2$,
\begin{align*}
S_1&\coloneqq
\sum_{3\leq \ell\leq 2k}a_{2k-\ell}(t) \left[x^{\ell}\right]\mathfrak a_0 (x)
\leq 
\sum_{\substack{4\leq \ell\leq 2k\\2\mid \ell}}
a_{2k-\ell}(t)\frac2{(\log 2)^\ell}
+
\sum_{\substack{3\leq \ell\leq 2k-1\\2\nmid \ell}}
a_{2k-\ell}(t)\frac2{(\log 2)^\ell}
\\&
\leq 
4\bigl(\log 2\bigr)^{-2k}+
\sum_{2\leq j<k}
\left(A_{k-j}r^{k-j}+B_{k-j}r^{k-j-1}\right)\frac2{(\log 2)^{2j}}
+
\sum_{2\leq j<k}
C_{k-j}\frac{2r^{k-j}}{(\log 2)^{2j-1}}.
\end{align*}
The sums are identically zero if $r=0$ or $k<3$;
for the other cases, we note that for all integers $m\geq 1$,
\begin{equation}\label{eqn_log_sum}
\sum_{0\leq j<m}(\log 2)^{-2j}=\frac{(\log 2)^{-2m}-1}{(\log 2)^{-2}-1}\leq (\log 2)^{-2m}
\end{equation}
since the appearing denominator is greater than $1$.
Therefore
\begin{align*}
\sum_{2\leq j<k}A_{k-j}\frac{r^{k-j}}{(\log 2)^{2j}}
&\leq
\max_{1\leq j\leq k-2}A_j
\frac{r^{k-2}}{(\log 2)^4}
\sum_{0\leq \ell<k-2}\frac 1{(\log 2)^{2\ell}}
\leq 
\frac{r^{k-2}}{(\log 2)^{2k}}
\max_{1\leq j\leq k-2}A_j,
\end{align*}
which is also valid for $A$ replaced by $B$ resp. $C$.
We obtain for $k\geq 2$
\[S_1\leq d_1^{(1)}(k) r^{k-2},\]
where
\[d_1^{(1)}(k)=
2\bigl(\log 2\bigr)^{-2k}
\left(2+\max_{1\leq j\leq k-2}A_j+\max_{1\leq j\leq k-2}B_j+\max_{1\leq j\leq k-2}C_j\right).\]
Here and in the following the maximum over the empty index set is defined as $0$.

In an analogous fashion, we treat the second sum in~\eqref{eqn_a2k_expanded}, which is nonzero only if $k\geq 2$.
We use the hypothesis $\lvert b_2(t)\rvert\leq 1$, and obtain for $k\geq 2$
\begin{align*}
\hspace{2em}&\hspace{-2em}
S_2\coloneqq\sum_{1\leq \ell\leq 2k}b_{2k-\ell}(t) \left[x^{\ell}\right]\mathfrak b_0 (x)
\leq 
\sum_{\substack{2\leq \ell\leq 2k\\2\mid \ell}}
b_{2k-\ell}(t)\frac2{(\log 2)^\ell}
+
\sum_{\substack{1\leq \ell\leq 2k-1\\2\nmid \ell}}
b_{2k-\ell}(t)\frac2{(\log 2)^\ell}
\\&\leq
\sum_{1\leq j\leq k-2}
\left(A_{k-j-1}r^{k-j-1}+D_{k-j}r^{k-j-2}\right)\frac2{(\log 2)^{2j}}
+2\bigl(\log 2\bigr)^{-2k+2}
\\&+
\sum_{1\leq j\leq k-1}
E_{k-j}r^{k-j-1}\frac2{(\log 2)^{2j-1}}.
\end{align*}
Similarly to the treatment of $S_1$, using~\eqref{eqn_log_sum}, we have for $k\geq 2$
\[S_2\leq d_1^{(2)}(k) r^{k-2},\]
where
\[d_1^{(2)}(k)=
2(\log 2)^{-2k}
\left(1+\max_{1\leq j\leq k-2}A_j+\max_{2\leq j\leq k-1}D_j+\max_{1\leq j\leq k-1}E_j\right).
\]

This implies the statements for $a_{2k}$ in~\eqref{eqn_core_estimates_even}.
We proceed to $b_{2k}$ and obtain
\begin{equation}\label{eqn_b2k_step}
\begin{aligned}
b_{2k}(t')=&\mp \frac{2^m-1}{2^{m+1}}a_{2k-2}(t)+
\sum_{3\leq \ell\leq 2k}a_{2k-\ell}(t) \left[x^{\ell}\right]\mathfrak c_0 (x)
\\&+\frac1{2^m}b_{2k}(t)+\sum_{1\leq \ell\leq 2k}b_{2k-\ell}(t)\left[x^\ell\right]\mathfrak d_0(x).
\end{aligned}
\end{equation}
Again, for $k=1$, the sums vanish, and we obtain the second and fourth lines of~\eqref{eqn_core_estimates_even_1}.

For $k\geq 2$, the two sums occurring in~\eqref{eqn_b2k_step} can be estimated by $d^{(1)}_1(k)r^{k-2}$ and $d^{(2)}_2(k)r^{k-2}$ respectively.
This follows by replacing $\mathfrak a$ by $\mathfrak c$ and $\mathfrak b$ by $\mathfrak d$ and recycling the argument from above.
%
This implies lines two and four of~\eqref{eqn_core_estimates_even}.


It follows that $d_1(k)=d^{(1)}_1(k)+d^{(2)}_1(k)$ is an admissible constant for all of the four formulas in~\eqref{eqn_core_estimates_even}.

\noindent
\textbf{Deriving bounds for the even moments.}
We apply the eight equations in~\eqref{eqn_core_estimates_even_1} and~\eqref{eqn_core_estimates_even} successively in order to obtain the ``even part'' of the statement, that is, the estimates for $a_{2k}(t)$ and $b_{2k}(t)$.

We begin with $b_2$ and show $\lvert b_2(t)\rvert\leq 1$ by induction: we have $b_2(0)=m_2(0)-m_2(1)=-1$,
and by~\eqref{eqn_core_estimates_even_1} we obtain
\[\lvert b_2(t')\rvert\leq \frac{2^m-1}{2^m}+\frac 1{2^m}\leq 1\]
for both $t'=2^mt$ and $t'=2^mt+2^m-1$.
Moreover,
$a_2(0)=1$ and 
\[a_2(t')=
a_2(t)+\frac{2^m-1}{2^m}\pm \frac{2^m-1}{2^m}b_2(t)
\leq a_2(t)+2,\]
which implies $a_2(t)\leq 2r+1$ for all $t\geq 0$ having $r$ blocks.
We therefore set $A_1=2$ and $B_1=1$.
Clearly $\lvert a_2(t)\rvert\leq A_1r^1+B_1r^0$,
and the estimates for $a_2$ and $b_2$ in the proposition are proved.

We assume now that $k\geq 2$ and we consider
$b_{2k}$: we have
\[
\left\lvert b_{2k}(t')\right\rvert
\leq
\left\lvert
\mp\frac{2^m-1}{2^{m+1}}a_{2k-2}(t)+\frac1{2^m}b_{2k}
\right\rvert
+d_1(k)r^{k-2}
\leq \frac {\left\lvert b_{2k}(t)\right\rvert+a_{2k-2}(t)}{2}+d_1(k) r^{k-2},
\]
where $t'$ results from $t$ by appending $\tO$s (if $t$ is odd) or by appending $\tL$s (if $t$ is even).

Here $r$ is the number of blocks in $t$.
By iteration, exploiting the denominator $2$ (geometric series!), and by applying the induction hypothesis and
\[\left\lvert b_{2k}(0)\right\rvert\leq \frac 1{(\log 2)^{2k}},\]
we obtain
\[
\left\lvert b_{2k}(t)\right\rvert
\leq A_{k-1}r^{k-1}+B_{k-1}r^{k-2}+2d_1(k)r^{k-2}+\frac1{(\log 2)^{2k}}
\]
if $t$ has $r$ blocks.
We therefore set $D_k=B_{k-1}+2d_1(k)+(\log 2)^{-2k}$ and this case is completed.
We proceed to the case $a_{2k}$, where $k\geq 2$:
each time we append a block of $\tO$s or $\tL$s to $t$, we add at most
\begin{align*}
\left\lvert a_{2k}(t')-a_{2k}(t)\right\rvert
&\leq
\frac{2^m-1}{2^{m+1}}a_{2k-2}(t)+\frac{2^m-1}{2^m}\lvert b_{2k}(t)\rvert
+d_1(k)r^{k-2}
\\&\leq
\frac 12a_{2k-2}(t)+\left\lvert b_{2k}(t)\right\rvert+d_1(k)r^{k-2}
\\&\leq \frac 32 A_{k-1}r^{k-1}+
\left(B_{k-1}+D_k+d_1(k)\right)
r^{k-2}.
\end{align*}
This follows from~\eqref{eqn_core_estimates_even} and line three of the induction statement.
We wish to successively append a block of $\tO$s or $\tL$s; this corresponds to summing this inequality in $r$. 
For $\ell\geq 1$ and $N\geq 0$ we have
\[\sum_{1\leq n<N}n^{\ell-1}
\leq
\int_1^{N} n^{\ell-1}\ud n
\leq \frac{N^\ell}\ell.\]
Noting that
$a_{2k}(0)=m_{2k}(1)\leq (\log 2)^{-2k}$ for $k\geq 1$ by~\eqref{eqn_start},
we obtain therefore
\[a_{2k}(t)\leq A_kr^k+B_kr^{k-1}\]
with 
\begin{equation}\label{eqn_AB_definition}
\begin{aligned}
A_k&=\frac {3}{2k}A_{k-1}\quad\mbox{and}\\
B_k&=\frac 1{k-1}\bigl(B_{k-1}+D_k+d_1(k)\bigr)+\frac1{(\log 2)^{2k}}
\\&=\frac 1{k-1}\bigl(2B_{k-1}+3d_1(k)\bigr)+\frac2{(\log 2)^{2k}},
\end{aligned}
\end{equation}
which proves the part of the induction statement concerning the even moments.

\noindent
\textbf{Deriving bounds for the odd moments.}
For the odd case, concerning $a_{2k+1}$ and $b_{2k+1}$, we proceed similarly.
Suppose that $k\geq 1$.
Using~\eqref{eqn_momentcoeff_rec}, Lemma~\ref{lem_lower_coeffs} and expanding Cauchy products, we obtain 
\begin{align*}
a_{2k+1}(t')=a_{2k+1}(t)&+\sum_{2\leq \ell\leq 2k+1}a_{2k+1-\ell}(t) \left[x^{\ell}\right]\mathfrak a_0 (x)
\\&\pm\frac{2^m-1}{2^m}b_{2k+1}(t)+\sum_{1\leq \ell\leq 2k+1}b_{2k+1-\ell}(t)\left[x^\ell\right]\mathfrak b_0(x),
\end{align*}
where ``$+$'' corresponds to appending a block of $\tO$s.
By Lemma~\ref{lem_higher_coeffs} and the induction hypothesis, using $a_1(t)=0$ and $a_0(t)=2$, we have for $k\geq 1$
\begin{align*}
\hspace{2em}&\hspace{-2em}
\sum_{2\leq \ell\leq 2k+1}a_{2k+1-\ell}(t) \left[x^{\ell}\right]\mathfrak a_0 (x)
\leq
\sum_{\substack{2\leq \ell\leq 2k\\2\mid \ell}}
a_{2k+1-\ell}(t)\frac2{(\log 2)^\ell}
+
\sum_{\substack{3\leq \ell\leq 2k+1\\2\nmid \ell}}
a_{2k+1-\ell}(t)\frac2{(\log 2)^\ell}
\\&\leq
\sum_{1\leq j<k}C_{k-j}r^{k-j}\frac2{(\log 2)^{2j}}
+
\sum_{1\leq j<k}\left(A_{k-j}r^{k-j}+B_{k-j}r^{k-j-1}\right)
\frac2{(\log 2)^{2j+1}}
\\&+
\frac{4}{(\log 2)^{2k+1}}
\leq
d_2^{(1)}(k)r^{k-1},
\end{align*}
where
\[
d_2^{(1)}(k)=
2\left(\log 2\right)^{-2k-1}
\left(2+\max_{1\leq j\leq k-1}A_j
+\max_{1\leq j\leq k-1}B_j+\max_{1\leq j\leq k-1}C_j\right).
\]
Moreover, using also the even case proved above and $\lvert b_2(t)\rvert\leq 1$, we get
\begin{align*}
\hspace{4em}&\hspace{-4em}
\sum_{1\leq \ell\leq 2k+1}b_{2k+1-\ell}(t) \left[x^{\ell}\right]\mathfrak b_0 (x)
\leq
\sum_{\substack{2\leq \ell\leq 2k\\2\mid \ell}}
b_{2k+1-\ell}(t)\frac2{(\log 2)^\ell}
+
\sum_{\substack{1\leq \ell\leq 2k+1\\2\nmid \ell}}
b_{2k+1-\ell}(t)\frac2{(\log 2)^\ell}
\\&\leq
\sum_{1\leq j<k}
E_{k-j}r^{k-j-1}
\frac2{(\log 2)^{2j}}
\\&+
\sum_{1\leq j<k}\left(
A_{k-j}r^{k-j}+D_{k-j+1}r^{k-j-1}\right)
\frac2{(\log 2)^{2j-1}}
+
\frac2{(\log 2)^{2k-1}}
\\&\leq
d_2^{(2)}(k)r^{k-1},
\end{align*}
where
\[
d_2^{(2)}(k)=
2\left(\log 2\right)^{-2k-1}
\left(1+\max_{1\leq j\leq k-1}A_j+\max_{2\leq j\leq k}D_j+\max_{1\leq j\leq k-1}E_j\right).
\]
This estimate is valid for $k\geq 1$.
Therefore
\begin{align}\label{eqn_a2k1}
a_{2k+1}(t')=a_{2k+1}(t)\pm\frac{2^m-1}{2^m}b_{2k+1}(t)+\LandauO_1\bigl(d_2(k)r^{k-1}\bigr),
\end{align}
where $r$ is the number of blocks in $t$
and $d_2(k)=d_2^{(1)}(k)+d_2^{(2)}(k)$,
and where we use the symbol $\LandauO_1$ to indicate that the implied constant is bounded by $1$.

Concerning $b_{2k+1}$, we obtain from Lemma~\ref{lem_lower_coeffs}
\begin{align*}
b_{2k+1}(t')&=\sum_{2\leq \ell\leq 2k+1}a_{2k+1-\ell}(t) \left[x^{\ell}\right]\mathfrak c_0 (x)
+\frac1{2^m}b_{2k+1}(t)+\sum_{1\leq \ell\leq 2k+1}b_{2k+1-\ell}(t)\left[x^\ell\right]\mathfrak d_0(x),
\end{align*}
and therefore by the above argument (replacing $\mathfrak a$ and $\mathfrak b$ by $\mathfrak c$ and $\mathfrak d$ respectively)

\begin{align}\label{eqn_b2k1}
b_{2k+1}(t')&=\frac1{2^m}b_{2k+1}(t)+\LandauO_1\bigl(d_2(k)r^{k-1}\bigr).
\end{align}
By repeated application of this identity,
using~\eqref{eqn_start}, we obtain
\[
\lvert b_{2k+1}(t)\rvert \leq \frac 12\lvert b_{2k+1}(0)\rvert+2d_2(k)r^{k-1}
\leq
E_kr^{k-1}
\]
for all $t$, where $E_k=2d_2(k)+(\log 2)^{-2k-1}$ for $k\geq 1$.
Inserting this into~\eqref{eqn_a2k1}, we obtain
\[\lvert a_{2k+1}(t')\rvert \leq
\lvert a_{2k+1}(t)\rvert +\bigl(3d_2(k)+(\log 2)^{-2k-1}\bigr)r^{k-1}\]
and therefore by summation, using~\eqref{eqn_start} again,
\[a_{2k+1}(t)\leq C_kr^k,\]
where $C_k=3d_2(k)/k+2(\log 2)^{-2k-1}$.
This is valid for all $k\geq 1$.
The proof is complete.
\end{proof}
Summarizing, the recurrence for the quantities $A_j$ through $E_j$, used in the proof, is as follows:
\[
\begin{array}{r@{\hskip 1mm}c@{\hskip 1mm}lcr@{\hskip 1mm}c@{\hskip 1mm}l}
\multicolumn{3}{c}{k=1}&\hphantom{\hskip 3em}&
\multicolumn{3}{c}{k\geq 2}\\
\hline\vspace{-2mm}\\
A_1&=&2;&&
A_k&=&3A_{k-1}/(2k);
\\[2mm]
B_1&=&0;&&
B_k&=&\bigl(2B_{k-1}+3d_1(k)\bigr)/(k-1)+2(\log 2)^{-2k};
\\[2mm]
C_1&=&3d_2(1);&&
C_k&=&3d_2(k)/k+2(\log 2)^{-2k-1};
\\[2mm]
&&&&D_k&=&B_{k-1}+2d_1(k)+(\log 2)^{-2k};
\\[2mm]
E_1&=&2d_2(1);&&
E_k&=&2d_2(k)+(\log 2)^{-2k-1},
\end{array}
\]
where
\begin{multline*}
d_1(k)=
\frac2{(\log 2)^{2k}}
\biggl(3+2\max_{1\leq j\leq k-2}A_j+\max_{1\leq j\leq k-2}B_j\\+\max_{1\leq j\leq k-2}C_j+\max_{2\leq j\leq k-1}D_j+\max_{1\leq j\leq k-1}E_j\biggr)
\end{multline*}
for $k\geq 2$ and
\begin{multline*}d_2(k)=
\frac2{(\log 2)^{2k+1}}
\biggl(3+2\max_{1\leq j\leq k-1}A_j+\max_{1\leq j\leq k-1}B_j\\+\max_{1\leq j\leq k-1}C_j+\max_{2\leq j\leq k}D_j+\max_{1\leq j\leq k-1}E_j\biggr)
\end{multline*}
for $k\geq 1$.
Using this recurrence, it is easy to compute explicit bounds for the values $A_j$ through $E_j$, in particular, choosing $\varepsilon<1/4$, this leads to an effective bound $r_0$ such that $c_t>1/4$ as soon as $t$ has at least $r_0$ blocks.
However, we do not believe that these numerical values are particularly enlightening (and fairly large).
We therefore limit ourselves to a short summary:
for $k=1$, we see that $d_2(1)=6(\log 2)^{-4}$, from which we obtain $C_1$ and $E_1$.
In the step $k-1\rightarrow k$, we have to compute $d_1(k)$ first; then $B_k$ and $D_k$ can be obtained, and $d_2(k)$ as the next step (note that for the maximum $\max_{2\leq j\leq k}D_j$ we need $D_k$).
Finally, $C_k$ and $E_k$ can be computed.


\subsection{Bounding the characteristic function using a matrix product}
The correlations $\gamma_t(\vartheta)$ satisfy the following recurrence (see B\'esineau~\cite{B1972}): for all $t\geq 0$,
\[\gamma_0(\vartheta)=1,\quad \gamma_{2t}(\vartheta)=\gamma_t(\vartheta),\quad\gamma_{2t+1}(\vartheta)=\frac{\e(\vartheta)}2\gamma_t(\vartheta)+\frac{\e(-\vartheta)}2\gamma_{t+1}(\vartheta).\]
In order to capture this using a matrix product, we define
\[A(0)=\left(\begin{matrix}1&0\\\e(\vartheta)/2&\e(-\vartheta)/2\end{matrix}\right),\quad
A(1)=\left(\begin{matrix}\e(\vartheta)/2&\e(-\vartheta)/2\\0&1\end{matrix}\right).
\]
In~\cite{MS2012} we used the representation
\[
\gamma_t(\vartheta)
=
\left(\begin{matrix}1&0\end{matrix}\right)
A(\varepsilon_0)\cdots A(\varepsilon_\nu)
\left(\begin{matrix}1\\u\end{matrix}\right),
\]
where
$t=(\varepsilon_\nu\ldots\varepsilon_0)_2$ and
$u=\gamma_1(\vartheta)=\e(\vartheta)/(2-\e(-\vartheta))$.

Using this matrix identity, we proved~\cite[Lemma~2]{S2019} an upper bound for $\gamma_t(\vartheta)$ depending on the number $r$ of blocks of $\tL$s occurring in $t$. By a slight variation (we handled the values $\omega_t(\vartheta)=\gamma_t(\vartheta)/u$, where $u$ is bounded by $1$ in absolute value) we obtain the following statement.
\begin{lemma}\label{lem_muntjak}
Assume that $t\geq 1$ contains at least $4M+1$ blocks.
Then
\[\left\lvert \gamma_t(\vartheta)\right\rvert\leq \left(1-\frac 12\lVert \vartheta\rVert^2\right)^M.\]
\end{lemma}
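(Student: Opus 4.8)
The plan is to exploit the matrix identity
\[
\gamma_t(\vartheta)=\begin{pmatrix}1&0\end{pmatrix}A(\varepsilon_0)\cdots A(\varepsilon_\nu)\begin{pmatrix}1\\u\end{pmatrix}
\]
from \cite{MS2012}, grouping the factors according to the block structure of $t$. Writing $t=(\tL^{m_0}\tO^{n_0}\cdots)_2$, each block of $\tL$s contributes a power $A(1)^{m}$ and each block of $\tO$s a power $A(0)^{n}$. First I would compute these powers explicitly. Just as in the computation of $M_0^m$ and $M_1^m$ above, one sees that $A(0)^n$ and $A(1)^m$ have a simple closed form because the relevant products telescope; in particular $A(1)^m$ has the shape $\left(\begin{smallmatrix}(\e(\vartheta)/2)^m&*\\0&1\end{smallmatrix}\right)$ and $A(0)^n$ the transpose-like shape with $1$ in the top-left and $(\e(-\vartheta)/2)^n$ in the bottom-right. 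Passing to $\omega_t=\gamma_t/u$ (so that the vectors involved have entries bounded by $1$ in absolute value, since $|u|\le 1$), the whole product becomes a product of roughly $2\ell$ matrices, one per block.

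The second step is to track the evolution of a vector $v=\begin{pmatrix}p\\q\end{pmatrix}$ with $|p|,|q|\le 1$ under multiplication by a $\tL$-block matrix followed by an $\tO$-block matrix (or vice versa). The key quantitative observation is that one full pair of blocks \emph{contracts} the relevant norm by a factor $1-c\lVert\vartheta\rVert^2$ for some absolute constant $c$: the off-diagonal mixing together with the factors $\e(\pm\vartheta)/2$ means that the component that was close to extremal in modulus gets averaged against a term rotated by $\e(\pm2\vartheta)$ or so, and $|1+\e(\varphi)|/2 = |\cos(\varphi/2)| \le 1-c\lVert\varphi/(2\pi)\rVert^2$. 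So each block-pair costs a factor $(1-c\lVert\vartheta\rVert^2)$; choosing the right way to pair up $4M+1$ blocks into $M$ groups of four blocks each (four blocks give enough room to guarantee the averaging happens regardless of whether we start with $\tO$s or $\tL$s, and to absorb the constant $c$ into the clean bound $\tfrac12\lVert\vartheta\rVert^2$) yields $|\gamma_t(\vartheta)|\le(1-\tfrac12\lVert\vartheta\rVert^2)^M$. This is exactly the structure of \cite[Lemma~2]{S2019}, and the lemma here is stated as a ``slight variation'' of it, so the bulk of the argument is to adapt that proof to the normalization $\omega_t=\gamma_t/u$ and to re-derive the contraction constant carefully.

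The main obstacle I expect is pinning down the contraction constant and making sure it holds uniformly in $\vartheta$ and independently of the block lengths $m_i,n_i$. The subtlety is that a single very long block of $\tO$s (say) sends $A(0)^n$ close to $\left(\begin{smallmatrix}1&0\\ *&0\end{smallmatrix}\right)$, which on its own is not contracting in the direction one naively wants; one must show that after the \emph{next} block of $\tL$s the combined effect is genuinely contracting, using $|\e(\vartheta)+1|/2=|\cos(\pi\vartheta)|$ and the elementary inequality $\cos^2(\pi\vartheta)\le 1-4\lVert\vartheta\rVert^2+O(\lVert\vartheta\rVert^4)\le 1-2\lVert\vartheta\rVert^2$ valid on a neighbourhood of $\mathbb Z$, with a separate, easier estimate handling $\vartheta$ bounded away from $\mathbb Z$ (where $|\gamma_t|$ decays even faster, geometrically in the total number of digits). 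Packaging four consecutive blocks per group is the device that makes the bookkeeping clean: it guarantees at least one ``$\tO$-block then $\tL$-block'' (or the reverse) adjacency interior to the group, so the contraction fires once per group, and the leftover single block (the ``$+1$'' in $4M+1$) is harmless since each matrix has operator norm $\le 1$ on the relevant vectors.

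\begin{proof}

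\end{proof}
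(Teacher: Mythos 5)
Your proposal correctly identifies what the paper actually does for this lemma: rather than giving a self-contained argument, the paper cites \cite[Lemma~2]{S2019} and states that the only change needed is to track $\omega_t(\vartheta)=\gamma_t(\vartheta)/u$ instead of $\gamma_t(\vartheta)$, using $\lvert u\rvert\le 1$ so that $\lvert\gamma_t\rvert\le\lvert\omega_t\rvert$. Your sketch of the underlying mechanics — the $2\times 2$ matrix product from \cite{MS2012}, grouping factors by blocks, computing $A(0)^n$ and $A(1)^m$ as triangular matrices, and extracting a per-group contraction from $\lvert\cos(\pi\vartheta)\rvert^2\le 1-4\lVert\vartheta\rVert^2$ — matches the structure of that earlier argument, so yours and the paper's treatments are on the same footing. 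One small caveat: your stated reason for needing four blocks per group (``guarantees at least one $\tO$-then-$\tL$ adjacency interior to the group'') cannot be the operative one, since blocks of $\tO$s and $\tL$s necessarily alternate and any two consecutive blocks already supply such an adjacency; the real bookkeeping behind $4M+1$ lives in the proof of \cite[Lemma~2]{S2019}, which you, like the paper, do not reproduce.
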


\subsection{Splitting the integral}

We are interested in bounding the integral
\[
\int_0^{1/2}
\imagpart\gamma_t(\vartheta)\cot(\pi \vartheta) \ud \vartheta
\]
by $\varepsilon$.
(Note that the integrand is an even function and therefore the integrals over $[0,1/2]$ and $[1/2,1]$ yield the same value.)
We split the integration at the point 
$\vartheta_0=r^{-1/2}R$,
where the integer $R$ is chosen in a moment and $r$ is the number of blocks in $t$.

We begin with the estimation of the right part of the integral.
Let $M$ be maximal such that $4M+1\leq r$.
Then by Lemma~\ref{lem_muntjak}, we have
\[\bigl\lvert\gamma_t(\vartheta)\bigr\rvert
\leq \left(1-\frac 12\left\lVert \vartheta\right\rVert^2\right)^M
\leq \exp\left(-\frac M2\left\lVert \vartheta\right\rVert^2\right).
\]
We have $(r-4)/4\leq M\leq (r-1)/4$ (because $(r-5)/4\geq M$ is impossible due to the maximality of $M$, therefore $4M+5>r$, which implies $4M+4\geq r$).
Also, for $0\leq x\leq \pi/2$, we have the elementary inequality
\begin{equation}\label{eqn_cot_estimate}
\cot x\leq 1/x.
\end{equation}
We obtain 
\begin{multline*}
\int_{\vartheta_0}^{1/2}\imagpart \gamma_t(\vartheta)\cot(\pi\vartheta)\ud\vartheta
\leq
\int_{\vartheta_0}^{1/2}
\frac 1\vartheta
\exp\left(-\frac M2\left\lVert \vartheta\right\rVert^2\right)
\ud\vartheta
\\\leq
\sum_{m=R}^{\infty}
\int_{mr^{-1/2}}^{(m+1)r^{-1/2}}
\frac 1\vartheta
\exp\left(-\frac M2\vartheta^2\right)
\ud\vartheta
\leq
r^{-1/2}
\sum_{m=R}^{\infty}
\frac 1{mr^{-1/2}}
\exp\left(-\frac M2m^2/r\right)
\\\leq
\sum_{m=R}^{\infty}
\frac 1m
\exp\left(-\frac {m^2}8\left(1-\frac 4r\right)\right)
\leq
\sum_{m=R}^{\infty}
\frac 1m
\exp\left(-\frac {m^2}{16}\right).
\end{multline*}
This is valid for $r\geq 8$.
Since $\exp(-m^2/16)=O(1/m)$ for $m\rightarrow\infty$, this infinite sum is bounded by $c/R$ for some absolute (effective) constant $c$.
We therefore choose the integer $R=R(\varepsilon)$ large enough such that $c/R\leq \varepsilon/3$,
and we obtain
\begin{equation}\label{eqn_estimate_right_part}
\int_{\vartheta_0}^{1/2}\imagpart \gamma_t(\vartheta)\cot(\pi\vartheta)\ud\vartheta
\leq \frac{\varepsilon}{3}.
\end{equation}

The left part of the integral will be estimated using upper bounds for the odd moments, which is Proposition~\ref{prp_moment_bounds}.

From the estimate of $a_{2k+1}(t)$ and $b_{2k+1}$ in Proposition~\ref{prp_moment_bounds} we get by the triangle inequality
\[\left\lvert m_{2k+1}(t)\right\rvert\leq E_k' r^k\]
for some constant $E_k'$ only depending on $k$.
Moreover, from the estimate for $a_{2k}(t)$ we obtain by nonnegativity of the even moments 
\[m_{2K}(t)\leq A_{K} r^K+B_{K-1} r^{K-1}\]
and 
\[m_{2K+2}(t)\leq A_{K+1} r^{K+1}+B_K r^K.\]
For $r$ greater than some $r_0(K)$ we therefore have
\begin{equation}\label{eqn_even_moments_estimate}
m_{2K}(t)\leq 2A_Kr^K\mbox{ and }m_{2K+2}(t)\leq 2A_{K+1}r^{K+1}
\end{equation}
for all $t$ having at least $r$ blocks.
Let $K$ be large enough so that
\begin{equation}\label{eqn_LK_estimate}
L_K=
2\frac{\sqrt{(2K)!(2K+2)!A_KA_{K+1}}(2\pi)^{2K+1}}{(2K+1)!}\leq \frac{\varepsilon/3}{R^{2K+1}}.
\end{equation}
Note that for this inequality, we use the factor $k!$ in the denominator of $A_k$ in an essential way!
By~\eqref{eqn_imagpart_moments} and~\eqref{eqn_even_moments_estimate} we obtain for $r\geq r_0(K)$ 
\begin{align*}
\bigl\lvert\imagpart \gamma_t(\vartheta)\bigr\rvert
\leq \sum_{0\leq k<K}(2\pi \vartheta)^{2k+1}E_k'r^k
+L_K\vartheta^{2K+1}r^{K+1/2}.
\end{align*}
Using~\eqref{eqn_cot_estimate}, we obtain for $\vartheta\leq \vartheta_0$
\begin{align*}
\bigl\lvert\imagpart \gamma_t(\vartheta)
\cot(\pi \vartheta)
\bigr\rvert
&\leq \sum_{0\leq k<K}(2\pi)^{2k+1}E_k'\vartheta_0^{2k}r^k
+L_K\vartheta_0^{2K}r^{K+1/2}
\\&=
\sum_{0\leq k<K}(2\pi)^{2k+1}E_k'R^{2k}
+L_KR^{2K}r^{1/2}.
\end{align*}
Integrating from $0$ to $\vartheta_0$ yields for $r\geq r_0(K)$
\[\int_0^{\vartheta_0}
\imagpart \gamma_t(\vartheta)\cot(\pi\vartheta)\ud\vartheta
\leq
r^{-1/2}\sum_{0\leq k<K}(2\pi R)^{2k+1}E_k'
+L_KR^{2K+1}.
\]
The second summand is bounded by $\varepsilon/3$, using~\eqref{eqn_LK_estimate}.
The sum over $k$ does not depend on $r$.
For $r\geq r_1(K)$, we therefore have
\[\int_0^{\vartheta_0}
\imagpart \gamma_t(\vartheta)\cot(\pi\vartheta)\ud\vartheta\leq \frac{2\varepsilon}3
\]
and the theorem is proved.
\qed

\begin{remark}
We plan to prove more detailed estimates for the moments $m_k$ in the future.
For example, it is known~\cite{EH2018} that $m_{2k}(t)$ is \emph{usually} of size $r^k/(2^kk!)$, where $r$ is the number of blocks in $t$ (we skip the precise formulation of the property proved in~\cite{EH2018}).
We wish to sharpen this estimate, and consequently prove a \emph{lower bound} for the values $\delta(0,t)=\int_0^1\realpart \gamma_t(\vartheta)\ud\vartheta$ for $T-\LandauO\left(T^{1-\varepsilon}\right)$ many $t<T$.
Using~\eqref{eqn_ct_rep}, and also improving the estimates of the odd moments, we hope to obtain $c_t>1/2$ for these $t$ in this way, thus significantly improving the error term in the result~\cite{DKS2016} by Drmota, Kauers, and the author.
\end{remark}

\subsection*{Acknowledgements.}
The author is thankful to Johannes Morgenbesser for introducing him to Cusick's sum-of-digits conjecture on one of his first days as a PhD student in 2011.
Moreover, we wish to thank Jordan Emme and Thomas Stoll for fruitful discussions on the topic.
Finally, we thank Thomas Cusick for constant encouragement and interest in our work.

\end{document}